\newtheorem{thm}{Theorem}
\newtheorem{theorem}{Theorem}[section]
\newtheorem{corollary}[theorem]{Corollary}
\newtheorem{lemma}[theorem]{Lemma}
\def\irr#1{{\rm  Irr}(#1)}
\begin{document}

\title{$p$-group Camina pairs}

\author{Mark L. Lewis}

\address{Department of Mathematical Sciences, Kent State University, Kent, OH 44242}

\email{lewis@math.kent.edu}

\keywords{$p$-groups, Camina pairs, group center}
\subjclass[2010]{Primary 20C15}

\begin{abstract}
Let $(G,Z(G))$ be a Camina pair.  We prove that $G$ must be a $p$-group for some prime $p$.  We also prove that $|Z(G)| < |G:Z(G)|^{3/4}$.  Also, we discuss how one might build examples with $|Z(G)| > |G:Z(G)|^{1/2}$, although we are not able to prove the existence of such examples.
\end{abstract}

\maketitle

\section{Introduction}

Throughout this note $G$ is a finite group.  A pair $(G,N)$ is a {\it Camina pair} if $1 < N < G$ is a normal subgroup of $G$ and for every element $g \in G \setminus N$, the element $g$ is conjugate to all of $gN$.  These have been studied in a number of places (\cite{Camina}, \cite{ChMc}, \cite{ChMaSc}, \cite{coprime}, and \cite{sylow}).  There are a number of equivalent conditions for Camina pairs.  An equivalent condition that we use is: $(G,N)$ is a Camina pair if and only if for every element $g \in G \setminus N$ and for every element $n \in N$, there exists an element $y \in G$ so that $[y,g] = n$.  Another condition we refer to is: $(G,N)$ is a Camina pair if and only if every character in $\irr {G \mid N}$ vanishes on $G \setminus N$.  A third condition we need is: $(G,N)$ is a Camina pair if and only if $|C_G (g)| = |C_{G/N} (gN)|$ for all $g \in G \setminus N$.

It follows easily from the definition that if $(G,N)$ is a Camina pair, then $Z(G) \le N \le G'$.  The two extreme cases are when $N = G'$ and when $N = Z (G)$.  In the case $N
= G'$, we say that the group $G$ is a Camina group, and this case has been studied in a number of papers (\cite{DaSc}, \cite{MacD1}, and \cite{more}).  In this paper, we consider the other extreme case where $N = Z (G)$.

Under the assumption that $(G,Z(G))$ is a Camina pair, we will give a very short proof that $G$ must be a $p$-group for some prime $p$.  Our main goal in this paper is to bound $|Z(G)|$ in terms of $|G:Z(G)|$ and associated quantities.  It is quite easy to show that $|Z (G)| < |G: Z(G)|$ (see Theorem \ref{exponent quotient}).  We will show that better bounds exist.  The first bound in terms of the index of the derived subgroup is not much more difficult to prove.

\begin{thm} \label{first}
Let $(G,Z(G))$ be a Camina pair.  Then $|Z(G)| \le |G:G'|$.
\end{thm}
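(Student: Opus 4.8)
The plan is to work with the three equivalent formulations of the Camina condition, passing freely between them. Write $Z = Z(G)$. Since $(G,Z)$ is a Camina pair we have $Z \le G'$, and every $\chi \in \irr{G \mid Z}$ vanishes on $G \setminus Z$. Because $Z$ is central, each $\chi \in \irr G$ restricts as $\chi_Z = \chi(1)\lambda$ for a unique linear character $\lambda = \lambda_\chi \in \irr Z$, and $\chi \in \irr{G\mid Z}$ precisely when $\lambda_\chi \ne 1$. For such a $\chi$ the vanishing off $Z$ gives
\[
\card G = \sum_{z \in Z} \card{\chi(z)}^2 = \card Z \, \chi(1)^2,
\]
so $\chi(1)^2 = \gpindex G Z$; in particular $\gpindex G Z$ is a perfect square, and I set $d = \chi(1) = \sqrt{\gpindex G Z}$.

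First I would pin down how many characters lie in $\irr{G\mid Z}$. For each fixed $\lambda \in \irr Z$ the $\lambda$-homogeneous component of the regular representation has dimension $\gpindex G Z$, that is, $\sum \chi(1)^2 = \gpindex G Z$ with the sum over those $\chi \in \irr G$ having $\lambda_\chi = \lambda$. For $\lambda \ne 1$ every such $\chi$ has $\chi(1)^2 = \gpindex G Z = d^2$ by the previous step, so there is exactly one character over each nontrivial $\lambda$. Hence $\card{\irr{G\mid Z}} = \card Z - 1$, and each of these characters has degree $d$.

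Next I would reduce the inequality to a statement about $G'$ alone. Since $Z \le G'$, every linear character of $G$ is trivial on $Z$, so the $\gpindex G {G'}$ linear characters are disjoint from $\irr{G\mid Z}$; moreover, multiplying any $\chi_\lambda$ by a linear character $\mu$ leaves $\lambda$ unchanged, and by uniqueness forces $\mu\chi_\lambda = \chi_\lambda$. Because $\card G = \card Z \, d^2$, the target $\card Z \le \gpindex G {G'}$ is equivalent to $\card{G'} \le \gpindex G Z = d^2$. I would try to establish this by restricting a character $\chi = \chi_\lambda$ to $G'$: it vanishes on $G' \setminus Z$ and $Z \le Z(G')$, so $\langle \chi_{G'}, \chi_{G'}\rangle_{G'} = d^2\card Z/\card{G'} = \gpindex G {G'}$, and by Clifford theory the irreducible constituents of $\chi_{G'}$ form a single $G$-orbit inside $\irr{G'\mid\lambda}$.

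The main obstacle is exactly this last inequality $\card{G'}\le d^2$. The degree sums, class counts, and Clifford parameters $(e_\lambda, t_\lambda, f_\lambda)$ of $\chi_{G'}$ satisfy $e_\lambda^2 t_\lambda = \gpindex G {G'}$ and $t_\lambda f_\lambda^2 = \gpindex{G'} Z$, but these relations turn out to be identities that hold automatically, so they cannot by themselves produce the bound. The genuine input must come from using that $\chi$ vanishes on all of $G \setminus Z$ and not merely on $G \setminus G'$, together with the fully ramified structure this forces on the section $G'/Z$. I expect the cleanest route is to exploit the nondegeneracy encoded in the commutator formulation, namely that $[y,g] = n$ is solvable for every $n \in Z$, in order to bound $\gpindex{G'} Z = \card{(G/Z)'}$ against $d$ and thereby close the gap.
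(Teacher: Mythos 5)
Your character-theoretic setup is correct as far as it goes: $|G:Z(G)| = d^2$ is a square, $\irr {G \mid Z(G)}$ consists of $|Z(G)|-1$ characters each of degree $d$, and the target inequality is equivalent to $|G'| \le |G:Z(G)|$. But that equivalence is just the rearrangement $|Z(G)|\,|G'| \le |G|$, and everything after it is, as you yourself observe, a collection of identities that hold in any fully ramified situation and carry no information about the size of $G'$. The proposal therefore has a genuine gap: the inequality $|G'| \le d^2$ is never established, and the final sentence is a statement of intent rather than an argument. Moreover, bounding $|G':Z(G)|$ against $d$ is not obviously the right target; what is actually needed is a subgroup of index at least $|Z(G)|$ that contains $G'$.

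The missing step is carried out in the paper by precisely the commutator-map idea you gesture at, but applied to a carefully chosen element. Assume first that $Z(G) < G'$, so $G/Z(G)$ is a nonabelian nilpotent group and hence $G' \cap Z_2 > Z(G)$; pick $a \in (G' \cap Z_2) \setminus Z(G)$. Since $a \in Z_2$, the map $x \mapsto [x,a]$ is a homomorphism from $G$ to $Z(G)$, surjective by the Camina condition, with kernel $C_G(a)$; hence $|G:C_G(a)| = |Z(G)|$. The key point is that $[G',Z_2] = 1$, so $G' \le C_G(a)$ and therefore $|Z(G)| = |G:C_G(a)| \le |G:G'|$. The remaining case $G' = Z(G)$ is Macdonald's bound $|Z(G)|^2 \le |G:Z(G)|$ for Camina groups of nilpotence class $2$ (Lemma \ref{Camina gp}), which gives the claim there. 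So your instinct about where the genuine input must come from was right, but the actual content --- choosing $a$ inside $G' \cap Z_2$ and observing that $G'$ centralizes it --- is absent from the proposal, and without it the argument does not close.
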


When $(G,Z(G))$ is a Camina pair and $Z (G) < G'$, then we also obtain a bound for $|Z (G)|$ in terms of $|G':Z(G)|$.  In particular, we have the following result.

\begin{thm} \label{second}
Let $(G,Z(G))$ be a Camina pair with $Z(G) < G'$.  Then $|Z (G)| < |G':Z(G)|^3$.
\end{thm}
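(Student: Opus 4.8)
The plan is to exploit the vanishing condition to pin down the irreducible characters lying over $Z(G)$ and then push the information down to $G'$. First I would record that if $\chi \in \irr{G \mid Z(G)}$, then $\chi$ vanishes on $G \setminus Z(G)$, so computing $|G| = \sum_{g \in G}|\chi(g)|^2 = \sum_{z \in Z(G)}|\chi(z)|^2 = |Z(G)|\,\chi(1)^2$ gives $\chi(1)^2 = |G:Z(G)|$ for every such $\chi$. A central-character and column-orthogonality argument then shows that for each $\lambda \in \irr{Z(G)}$ one has $\sum_{\chi \text{ over } \lambda}\chi(1)^2 = |G:Z(G)|$; since every constituent over a \emph{nontrivial} $\lambda$ already satisfies $\chi(1)^2 = |G:Z(G)|$, there is a unique $\chi_\lambda \in \irr{G}$ lying over each nontrivial $\lambda$, and it has degree $\sqrt{|G:Z(G)|}$.

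The second step is the identity $\chi_\lambda \overline{\chi_\lambda} = \rho_{G/Z(G)}$, the inflated regular character of $G/Z(G)$: the product has degree $|G:Z(G)| = |G/Z(G)|$, is trivial on $Z(G)$, and vanishes off $Z(G)$, which forces it to be the regular character. In particular every $\psi \in \irr{G/Z(G)}$ occurs in $\chi_\lambda\overline{\chi_\lambda}$ with multiplicity $\psi(1)$; applied to the $|G:G'|$ linear characters of $G/G'$, this says each of them fixes $\chi_\lambda$ under multiplication.

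Third, I would restrict $\chi_\lambda$ to $G'$ and write $\chi_{\lambda,G'} = e\sum_{i=1}^{t}\theta_i$ by Clifford theory. The computation $[\chi_{\lambda,G'},\chi_{\lambda,G'}]_{G'} = \tfrac{1}{|G'|}\sum_{z\in Z(G)}|\chi_\lambda(z)|^2 = |G:G'|$ gives $e^2 t = |G:G'|$, while comparing degrees gives $t\,\theta_1(1)^2 = |G':Z(G)|$. Moreover $t = |G:I_G(\theta_1)|$, and since $e^2 t = |G:G'|$ forces $e^2 = |I_G(\theta_1):G'|$, the character $\theta_1$ is fully ramified over $G'$ in its inertia group. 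Combining with Theorem \ref{first}, $|Z(G)| \le |G:G'| = e^2 t$, and since $t \le t\,\theta_1(1)^2 = |G':Z(G)|$, it remains only to bound the ramification $e$.

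The main obstacle is precisely this ramification bound: I expect to need $e \le |G':Z(G)|$, i.e. $|I_G(\theta_1):G'| \le |G':Z(G)|^2$. This should come from the fully ramified structure of the inertia group, whose section $I_G(\theta_1)/G'$ carries a nondegenerate alternating form built from commutators; the facts that $Z(G)$ is central in $G$ (so $Z(G) \le Z(\theta_1)$ and $\theta_1$ is scalar on $Z(G)$) and that $[I_G(\theta_1),I_G(\theta_1)] \le G'$ should confine this section to size at most $|G':Z(G)|^2$. Granting this, $|Z(G)| \le e^2 t \le |G':Z(G)|^2\cdot|G':Z(G)| = |G':Z(G)|^3$; for the strict inequality I would observe that the bounds $t \le |G':Z(G)|$ and $e \le |G':Z(G)|$ cannot both be equalities, since equality in the former forces $\theta_1$ linear, a case in which I would check the ramification bound becomes strict.
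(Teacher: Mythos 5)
Your first three steps are correct and cleanly done: every $\chi\in\irr{G\mid Z(G)}$ is fully ramified with $\chi(1)^2=|G:Z(G)|$, the restriction computation $[\chi_{G'},\chi_{G'}]=|G:G'|$ gives $e^2t=|G:G'|$, the degree comparison gives $t\,\theta_1(1)^2=|G':Z(G)|$, and $e^2=|I_G(\theta_1):G'|$. The problem is the step you yourself flag as ``the main obstacle'': the bound $e\le |G':Z(G)|$ is not proved (the proposed alternating form $\langle xG',yG'\rangle$ built from $\theta_1([x,y])$ is not even well defined, since $[x,y]$ lies in $G'$ but has no reason to lie in $Z(\theta_1)$), and in fact it is false. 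Your overall chain is $|Z(G)|\le |G:G'|=e^2t\le |G':Z(G)|^3$, which would force $|G:G'|\le |G':Z(G)|^3$; but take any Camina pair $(H,Z(H))$ with $|Z(H)|=p$ and $Z(H)<H'$ (the paper exhibits five such groups of order $32$) and form the central product $G=H*E$ with an extraspecial group $E$ of order $p^{2r+1}$, identifying the centers. One checks directly from the conjugacy definition that $(G,Z(G))$ is again a Camina pair with $Z(G)=Z(H)$ and $G'=H'$, so $|G':Z(G)|$ stays fixed while $|G:G'|=e^2t\to\infty$ as $r\to\infty$; since $t\le|G':Z(G)|$ is bounded, $e$ is unbounded. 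So the first inequality $|Z(G)|\le|G:G'|$ (Theorem \ref{first}) discards too much: $|G:G'|$ can vastly exceed $|G':Z(G)|^3$ even though $|Z(G)|$ itself remains tiny, and no bound on the ramification $e$ alone can rescue the argument.

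The paper's proof takes an entirely different, non-character-theoretic route: it studies the set ${\mathcal C}=\{x\in G\mid C(x)\cap G'>Z(G)\}$, writes it as a union of at most $(p^l-1)/(p-1)$ centralizers each of order at most $p^n$ to conclude $|{\mathcal C}|<p^{n+l}$, and then plays this off against Lemma \ref{inter}, which uses the commutator homomorphism $D(g)/C(g)\cong Z(G)$ and a maximal abelian-modulo-center subgroup $D(x)$ to show that any subgroup contained in ${\mathcal C}$ has index at most $p^{2l}$. Comparing the two index estimates yields $m-l+1\le 2l$, i.e.\ $m\le 3l-1$, which is the strict inequality $|Z(G)|<|G':Z(G)|^3$. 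If you want to salvage a character-theoretic approach, you would need to work with invariants that, unlike $|G:G'|$, do not blow up under central products with extraspecial groups.
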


With this theorem in hand, we are able to prove the most general result in this note.

\begin{thm} \label{third}
Let $(G,Z(G))$ be a Camina pair.  Then $|Z (G)| < |G:Z(G)|^{3/4}$.
\end{thm}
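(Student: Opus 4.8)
The plan is to split into two cases according to whether $Z(G) = G'$ or $Z(G) < G'$, using that a Camina pair $(G,Z(G))$ always satisfies $Z(G) \le G'$. Throughout I would write $m = |G:Z(G)|$, $a = |G:G'|$, and $b = |G':Z(G)|$, so that $ab = m$ and $m > 1$ (since $Z(G) < G$).

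Suppose first that $Z(G) < G'$, i.e. $b > 1$. Then both preceding theorems apply: Theorem \ref{first} gives $|Z(G)| \le a$, hence $|Z(G)|^3 \le a^3$, while Theorem \ref{second} gives $|Z(G)| < b^3$. Since all quantities are positive, multiplying these yields
\[
|Z(G)|^4 = |Z(G)|^3 \cdot |Z(G)| \le a^3 \cdot |Z(G)| < a^3 b^3 = (ab)^3 = m^3,
\]
so that $|Z(G)| < m^{3/4} = |G:Z(G)|^{3/4}$. This case is essentially a one-line combination of the two earlier bounds, so I expect it to be routine.

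The remaining case $Z(G) = G'$ is where the real work lies, and I expect it to be the main obstacle. Here $[G,G'] = [G,Z(G)] = 1$, so $G$ has nilpotency class $2$ and the commutator induces an alternating biadditive map $\beta\colon G/Z(G) \times G/Z(G) \to Z(G)$. By the commutator form of the Camina condition recalled in the introduction, for every $g \in G \setminus Z(G)$ and every $n \in Z(G)$ there is $y$ with $[y,g] = n$; that is, $\beta(gZ(G), \cdot)$ is surjective onto $Z(G)$ for every nontrivial $gZ(G)$. Dualizing, every nonzero functional $\lambda$ on $Z(G)$ pulls back to a \emph{nondegenerate} alternating form $\lambda \circ \beta$ on $G/Z(G)$.

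To turn this into the bound $|Z(G)| \le m^{1/2}$ I would work over $\mathbb{F}_p$, using that $G/Z(G)$ and $Z(G)$ are elementary abelian in this case so that $\beta$ is $\mathbb{F}_p$-bilinear; the previous paragraph then exhibits a subspace of alternating forms on $G/Z(G)$ all of whose nonzero elements are nondegenerate. The Pfaffian is a form of degree $\tfrac12 \dim_{\mathbb{F}_p}(G/Z(G))$ on this space, and Chevalley--Warning forces any subspace of dimension exceeding $\tfrac12 \dim_{\mathbb{F}_p}(G/Z(G))$ to contain a nonzero singular element; hence $\dim_{\mathbb{F}_p} Z(G) \le \tfrac12 \dim_{\mathbb{F}_p}(G/Z(G))$, which is exactly $|Z(G)| \le |G:Z(G)|^{1/2}$. (Alternatively one may simply quote the known structure theory of class-$2$ Camina $p$-groups for this inequality.) Since $m > 1$ we have $m^{1/2} < m^{3/4}$, so $|Z(G)| \le m^{1/2} < m^{3/4}$ and the theorem follows. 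The delicate points I would need to verify are the elementary-abelian claims (in particular for $p = 2$) that legitimize the $\mathbb{F}_p$-bilinear and Pfaffian argument.
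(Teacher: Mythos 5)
Your proof is correct and, in the main case $Z(G) < G'$, is essentially the paper's argument: the paper combines Theorem \ref{derived bound} (i.e.\ $m \le 3l-1$) with $l \le n-m$ coming from Theorem \ref{first} to get $4m+1 \le 3n$, which is exactly your multiplicative combination $|Z(G)|^4 \le a^3|Z(G)| < (ab)^3$ rewritten in exponents. Your explicit handling of the case $Z(G) = G'$ is a welcome (and strictly speaking necessary) addition, since the paper's corollary tacitly assumes $l \ge 1$ so that Theorem \ref{derived bound} applies and relies on the earlier-quoted Lemma \ref{Camina gp} of Macdonald for the class-2 case; your Pfaffian/Chevalley--Warning argument is a valid self-contained proof of that lemma, though, as you note, simply citing it suffices.
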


When $|G:Z(G)|$ is small, we can obtain a stronger bound.  This is the content of the next theorem.

\begin{thm} \label{fourth}
Let $(G,Z(G))$ be a Camina pair with $Z (G) < G'$.  Then either $|Z (G)| \le |G:Z(G)|^{1/2}$ or $|Z (G)|p^4 \le |G:Z(G)|$.  In particular, if $|G:Z(G)| \le p^8$, then $|Z (G)| \le |G:Z (G)|^{1/2}$.
\end{thm}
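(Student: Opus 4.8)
The plan is to work multiplicatively with the three quantities $a = |G:G'|$, $b = |G':Z(G)|$ and $z = |Z(G)|$. Since $Z(G) \le G'$ we have $|G:Z(G)| = ab$, and since every $\chi \in \irr{G \mid Z(G)}$ vanishes on $G \setminus Z(G)$, the usual norm computation forces $\chi(1)^2 = |G:Z(G)|$ for each such $\chi$; hence $|G:Z(G)| = p^{2m}$ is a perfect square. Writing $a = p^\alpha$, $b = p^\beta$, $z = p^\zeta$, this means $\alpha + \beta = 2m$ is even, and the target becomes: either $\zeta \le m$ or $\zeta \le 2m-4$. I would record the two exponent inequalities coming from the earlier results, namely $\zeta \le \alpha$ (Theorem \ref{first}) and $\zeta \le 3\beta - 1$ (Theorem \ref{second}, since $z < b^3$ is a strict inequality between $p$-powers).

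First I would dispose of the case $\beta \ge 4$. Here Theorem \ref{first} gives $z \le a = |G:Z(G)|/b \le |G:Z(G)|/p^4$, which is precisely the second alternative $z\,p^4 \le |G:Z(G)|$. This reduces everything to $\beta \in \{1,2,3\}$, where $z$ is a priori bounded by the fixed power $b^3 \le p^9$. In this range I would combine $\zeta \le \alpha = 2m - \beta$, the bound $\zeta \le 3\beta - 1$, and the parity of $\alpha + \beta$ to reduce the failure of the theorem to a short, explicit list of integer triples $(\alpha,\beta,\zeta)$. A direct search shows the only triples escaping both alternatives are $(4,2,4)$, $(6,2,5)$, $(5,3,5)$ and $(7,3,7)$, that is, the groups with $(|Z(G)|,|G':Z(G)|)$ equal to $(p^4,p^2)$, $(p^5,p^2)$, $(p^5,p^3)$ and $(p^7,p^3)$. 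In particular, when $|G:Z(G)| \le p^8$, i.e.\ $m \le 4$, one has $2m-4 \le m$, so the dichotomy collapses to $z \le |G:Z(G)|^{1/2}$ and the ``in particular'' clause follows immediately.

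The main obstacle is eliminating those four configurations, since Theorems \ref{first}--\ref{third} are all consistent with each of them (for instance $(4,2,4)$ satisfies $\zeta \le \alpha$, $\zeta < 3\beta$, and $\zeta < \tfrac34(\alpha+\beta)$ simultaneously). For this I would abandon the crude size bounds and return to the finer structure of the Camina pair: the exact centralizer identity $|C_G(g)| = |C_{G/Z(G)}(gZ(G))|$ forces $|Z(G)|$ to divide $|C_{G/Z(G)}(x)|$ for every nonidentity $x \in G/Z(G)$, and for $x \in Z_2(G)\setminus Z(G)$ the map $g \mapsto [x,g]$ is a surjective homomorphism $G/G' \to Z(G)$ (this is also the source of Theorem \ref{first}).

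I expect that pushing this analysis yields the sharpened inequality $|Z(G)|^2 \le |G':Z(G)|^3$, i.e.\ $2\zeta \le 3\beta$. Each of the four surviving triples violates $2\zeta \le 3\beta$, so this single inequality rules them all out; in fact, together with $\zeta \le \alpha$ and the parity of $\alpha+\beta$ it yields the theorem uniformly and dispenses with the case split, since assuming $\zeta > m$ forces $\beta < \alpha < 2\beta$, whereupon $\beta \ge 4$ gives $\zeta \le \alpha \le 2m-4$ and $\beta \le 3$ leaves only $(\alpha,\beta) = (5,3)$, where $2\zeta \le 9$ already contradicts $\zeta > m = 4$. Establishing this sharper inequality is the crux of the argument, and I would expect it to require the most work.
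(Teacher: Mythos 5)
Your reduction is sound as far as it goes: combining $|Z(G)| \le |G:G'|$ (Theorem \ref{first}), $|Z(G)| \le |G':Z(G)|^{3}p^{-1}$ (Theorem \ref{second}), and the fact that $|G:Z(G)|$ is a square does confine any counterexample to the four configurations you list, and the ``in particular'' clause does follow formally from the dichotomy. But the proof stops exactly where the real work begins. Your plan for eliminating the four surviving triples rests entirely on the inequality $|Z(G)|^2 \le |G':Z(G)|^3$, which you only ``expect'' to follow from ``pushing this analysis.'' Nothing you write supports it: the centralizer identity $|C_G(g)| = |C_{G/Z(G)}(gZ(G))|$ and the surjection $G/G' \to Z(G)$ induced by commutation with an element of $Z_2 \setminus Z(G)$ are precisely the ingredients of Theorem \ref{first} and Lemma \ref{cents}, and they deliver $|Z(G)| \le |G:G'|$ --- a bound by the index of $G'$, not by a power of $|G':Z(G)|$. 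The only bound of the latter kind available is Theorem \ref{second} itself, which you have already used and which is too weak to exclude any of the four cases.

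Worse, the inequality you propose as the crux is almost certainly false in the generality you assert it. The conjectural family sketched in Section 6 has $|Z(G)| = p^{2k}$ and $G' \le Z_2$ with $|Z_2:Z(G)| = p^{k+1}$, so $|G':Z(G)|^3 \le p^{3k+3} < p^{4k} = |Z(G)|^2$ once $k \ge 4$; the author states he believes such groups exist, so no argument of the kind you outline can prove $|Z(G)|^2 \le |G':Z(G)|^3$ unconditionally, and you offer no argument for it even in the four special cases. The paper's actual elimination is a substantial structural analysis: the triple $(4,2,4)$ (where $|G:Z(G)| = |Z(G)|p^2$) is excluded by Corollary \ref{m=2}, and the remaining case $|G:Z(G)| = |Z(G)|p^3$ is split according to $|G':Z(G)| \in \{p, p^2, p^3\}$ and handled via Lemmas \ref{D-quo}, \ref{index p}, \ref{inter} and Corollary \ref{small} --- bounding the index of subgroups contained in ${\mathcal C} = \{x \mid C(x) \cap G' > Z(G)\}$, and in the hardest subcase building a bijection between the order-$p$ subgroups of $G'/Z(G)$ and the subgroups lying between $L = D(b_1) \cap D(b_2)$ and $G$. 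None of that machinery is replaced by anything in your proposal, so the gap is not a technicality but the entire substance of the theorem.
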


Also, if $G/ Z(G)$ does not have exponent $p$, we can also obtain a stronger result.

\begin{thm} \label{fifth}
Let $(G,Z(G))$ be a Camina pair.  If $G$ is a $p$-group and the exponent of $G/Z (G)$ is not $p$, then $|Z (G)| < |G:Z(G)|^{1/2}$.
\end{thm}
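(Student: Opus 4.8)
The plan is to attach to each non-central element its commutator homomorphism and then to \emph{double} the resulting index by passing from $g$ to $g^p$. Throughout, for $g \in G \setminus Z(G)$ I write $D_g = \{x \in G : [x,g] \in Z(G)\}$, the full preimage in $G$ of $C_{G/Z(G)}(gZ(G))$; this is a subgroup containing $Z(G)$. Since $[x,g]$ is central whenever $x \in D_g$, the map $\phi_g \colon D_g \to Z(G)$, $x \mapsto [x,g]$, is a homomorphism with kernel $C_G(g)$, and the Camina hypothesis makes it surjective. Hence $|D_g : C_G(g)| = |Z(G)|$ for every $g \in G \setminus Z(G)$.

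First I would record that $Z(G)$ is elementary abelian. Indeed, $G/Z(G)$ is a nontrivial $p$-group, so it contains an element $hZ(G)$ of order $p$; then $h^p \in Z(G)$, and for $x \in D_h$ one has $[x, h^p] = [x,h]^p$ because $[x,h]$ is central, while $[x, h^p] = 1$ since $h^p \in Z(G)$. Thus $[x,h]^p = 1$ for all $x \in D_h$, and as $\phi_h$ is onto $Z(G)$ this forces $z^p = 1$ for every $z \in Z(G)$.

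Now I would invoke the exponent hypothesis: choose $g \in G$ with $g^p \notin Z(G)$, so that both $g$ and $g^p$ lie in $G \setminus Z(G)$. The key step is the inclusion $D_g \le C_G(g^p)$: for $x \in D_g$ we have $[x, g^p] = [x,g]^p = 1$, the last equality because $Z(G)$ is elementary abelian. Combining this with the chain $Z(G) \le C_G(g) \le D_g \le C_G(g^p) \le D_{g^p} \le G$ and the two index equalities $|D_g : C_G(g)| = |D_{g^p} : C_G(g^p)| = |Z(G)|$, I would compute
\[
|G : C_G(g)| \ge |D_{g^p} : C_G(g)| = |D_{g^p} : C_G(g^p)|\,|C_G(g^p) : D_g|\,|D_g : C_G(g)| \ge |Z(G)|^2 .
\]
Finally, $g \in C_G(g) \setminus Z(G)$ gives $|C_G(g) : Z(G)| \ge p$, so
\[
|G : Z(G)| = |G : C_G(g)|\,|C_G(g) : Z(G)| \ge |Z(G)|^2\, p > |Z(G)|^2 ,
\]
which is exactly the strict bound $|Z(G)| < |G:Z(G)|^{1/2}$.

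The one genuinely substantive point is the inclusion $D_g \le C_G(g^p)$: it is precisely here that both the elementary-abelian structure of $Z(G)$ and the choice of $g$ with $g^p \notin Z(G)$ enter, and it is what allows the factor $|Z(G)|$ produced by $\phi_g$ to be stacked on top of the factor produced by $\phi_{g^p}$. Everything else is bookkeeping with the subgroup chain, and the strictness is free, coming from the trivial fact that $C_G(g)$ properly contains $Z(G)$.
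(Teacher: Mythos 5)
Your proof is correct and follows essentially the same route as the paper's Theorem \ref{exponent quotient}: the key inclusion $D(g) \le C(g^p)$ (obtained from $[x,g^p]=[x,g]^p=1$ using that $Z(G)$ is elementary abelian) and the resulting chain $Z(G) \le C(g) \le D(g) \le C(g^p) \le D(g^p) \le G$ with two jumps of size $|Z(G)|$ is exactly the paper's telescoping argument, truncated to the two links needed when the exponent of $G/Z(G)$ exceeds $p$. The only cosmetic differences are that you reprove the elementary abelianness of $Z(G)$ directly rather than citing Macdonald, and you content yourself with the factor $p$ from $|C(g):Z(G)|\ge p$ where the paper iterates over the full exponent $p^n$ to get the sharper bound $|Z(G)|^n p^n \le |G:Z(G)|$.
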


When we originally started working on this problem, we conjectured that $|Z (G)| \le |G:Z (G)|^{1/2}$ whenever $(G,Z(G))$ is a Camina pair.  This conjecture was based on the fact that this is known to be true if $G$ is a Camina group of nilpotence class 2.  (See Theorem 3.2 of \cite{MacD1}.)  At this time, we do not have any counter-examples to this conjecture.  However, we will sketch out some properties of possible groups that would violate the conjecture, and while we cannot prove that such groups exist, we do have seem reason to believe that they do.

\section{Basics}

We first prove that if $(G,Z(G))$ is a Camina pair, then $G$ is a $p$-group for some prime $p$.  The key to proving this result is a fact proved by Camina.

\begin{lemma}
Let $(G,Z(G))$ be a Camina pair.  Then $G$ is a $p$-group for some prime $p$.
\end{lemma}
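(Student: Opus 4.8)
The plan is to use the structural constraints forced by the Camina pair condition together with the stated fact due to Camina that a group admitting a Camina pair $(G,N)$ is built from a $p$-group and a $q$-group in a controlled way. Specifically, Camina proved that if $(G,N)$ is a Camina pair then either $G$ is a $p$-group, or $G/N$ is a $p$-group and $N$ is a $p'$-group (a Frobenius-type situation), or $N$ is a $p$-group and $G/N$ is a $p'$-group. The first thing I would do is recall this trichotomy and specialize it to the case $N = Z(G)$, so that the job reduces to eliminating the two mixed cases.

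The mixed case in which $Z(G)$ is a $p$-group and $G/Z(G)$ is a $p'$-group I would rule out as follows. If $G/Z(G)$ has order coprime to $|Z(G)|$, then by Schur--Zassenhaus the central extension $1 \to Z(G) \to G \to G/Z(G) \to 1$ splits, which forces $Z(G)$ to be a direct factor and contradicts $Z(G) \le G'$ (indeed $G$ would be a direct product $Z(G) \times H$, giving $Z(G) \cap G' = 1$, whereas a Camina pair requires $1 < Z(G) \le G'$). For the other mixed case, where $G/Z(G)$ is a $p$-group and $Z(G)$ is a nontrivial $p'$-group, I would again invoke $Z(G) \le G'$ together with coprimeness: the coprime action and the fact that $Z(G)$ is central force $G = P \times Z(G)$ for a Sylow $p$-subgroup $P$, and once more $Z(G) \cap G' = 1$ contradicts $1 < Z(G) \le G'$. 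In short, both mixed cases collapse because a \emph{central} coprime normal subgroup must split off as a direct factor, which is incompatible with lying inside the derived subgroup.

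Having eliminated both mixed cases, the only surviving possibility in Camina's trichotomy is that $G$ itself is a $p$-group, which is exactly what I want to prove. So the overall shape of the argument is: quote Camina's classification lemma, then kill the two cross-characteristic cases using Schur--Zassenhaus and the defining containment $Z(G) \le G'$.

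The main obstacle I anticipate is pinning down the precise statement of Camina's lemma and making sure the coprime-splitting argument is airtight in the specific case $N = Z(G)$: one must verify carefully that centrality of $N$ really does upgrade the Schur--Zassenhaus complement to a genuine direct factor (so that $N \cap G' = 1$ follows), rather than merely a semidirect complement. Since $N = Z(G)$ is central, the complement centralizes $N$ and the product is direct, so this step should go through cleanly, but it is the point where the hypothesis $N = Z(G)$ (as opposed to a general Camina pair) is doing essential work, and it is worth stating explicitly.
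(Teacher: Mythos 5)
Your argument rests on a misstatement of Camina's Theorem~2. What Camina actually proves (and what the paper quotes) is the trichotomy: $G$ is a Frobenius group with Frobenius kernel $N$, or $G/N$ is a $p$-group, or $N$ is a $p$-group. There is no assertion that the complementary factor is a $p'$-group in the latter two branches. Your elimination of both ``mixed cases'' runs entirely through Schur--Zassenhaus, which needs $\gcd(|Z(G)|,|G:Z(G)|)=1$; that coprimality is precisely the part of your trichotomy that is not in the theorem. Concretely, the case where $Z(G)$ is a $p$-group but $|G:Z(G)|$ is divisible by $p$ and by some other prime $q$ (and likewise the case where $G/Z(G)$ is a $p$-group while $p$ and another prime both divide $|Z(G)|$) is untouched by your argument. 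Your coprime-splitting observation does correctly dispose of the genuine Frobenius branch, since a Frobenius kernel is a normal Hall subgroup and a central one would force $G=N$; but that is the easy branch, and it does not substitute for the two that remain.

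What is missing is an argument that works without any coprimality between $|Z(G)|$ and $|G:Z(G)|$. The paper supplies an elementary one: for $a \in G \setminus Z(G)$ and $z \in Z(G)$, the Camina condition makes $a$ and $az$ conjugate, hence of equal order, and since $a$ and $z$ commute this forces $o(z)$ to divide $o(a)$. If $G$ is not a $p$-group, then in either surviving branch of the correct trichotomy one can choose $a$ and $z$ of coprime orders --- if $Z(G)$ is a $p$-group, take $a$ of prime order $q \ne p$ (necessarily noncentral) and $z$ of order $p$; if $G/Z(G)$ is a $p$-group, then $G$ is nilpotent with its Hall $p'$-subgroup central, so some noncentral $a$ has $p$-power order while $Z(G)$ contains an element of order $q \ne p$ --- and this contradicts $o(z) \mid o(a)$. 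You should replace the Schur--Zassenhaus step with an argument of this kind, since Camina's theorem does not deliver the coprimality you are relying on.
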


\begin{proof}
In Theorem 2 of \cite{Camina}, Camina proved that if $(G,N)$ is a Camina pair, then $G$ is a Frobenius group with Frobenius kernel $N$ or either $G/N$ or $N$ is a $p$-group for some prime $p$.  Since $N = Z(G)$, we see that $G$ is not a Frobenius group.  So either $G/Z(G)$ or $Z(G)$ is a $p$-group for some prime $p$.  Given $a \in G \setminus Z(G)$ and $z \in Z(G)$, we know that $a$ and $az$ are conjugate, so they have the same order.  Since $a$ and $z$ commute, this implies that $o(z)$ divides $o(a)$.  If $G$ is not a $p$-group, then one can choose $a$ and $z$ to have coprime orders, a contradiction.
\end{proof}

When $(G,N)$ is a Camina pair and $G$ is a $p$-group for some prime $p$, there is a connection between $N$ and the lower and upper central series for $G$.  We denote the lower central series by $G_1 = G$, $G_2 = [G,G] = G'$, and $G_i = [G_{i-1},G]$ for all $i \ge 2$, and we denote the upper central series by $Z_1 = Z(G)$ and $Z_i/Z_{i-1} = Z(G/Z_{i-1})$ for $i \ge 2$.  The next two lemmas were proved by Macdonald in \cite{MacD1}.  This first lemma (Theorem 2.2 of \cite{MacD1}) shows that if $(G,Z(G))$ is a Camina pair, then each factor in the upper central series has exponent $p$.

\begin{lemma} \cite{MacD1} \label{exponent center}
Let $(G,Z(G))$ be a Camina pair, and let $p$ be the prime so that $G$ is a $p$-group.  Then $Z_i/Z_{i-1}$ has exponent $p$ for $1 \le i \le c$ where $c$ is the nilpotence class of $G$.  In particular, $Z(G)$ is an elementary abelian $p$-group.
\end{lemma}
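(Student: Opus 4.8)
The plan is to induct on $i$, proving that $Z_i/Z_{i-1}$ has exponent $p$ for $1 \le i \le c$ (with the convention $Z_0 = 1$). The base case $i = 1$ is the assertion that $Z(G) = Z_1$ is elementary abelian, which I would prove by a direct $p$-th power argument exploiting the Camina conjugacy together with the fact that a $p$-th power which happens to be central can only be conjugate to itself. For the inductive step I would pass to the quotient $\bar G = G/Z_{i-2}$, where the term $Z_{i-1}/Z_{i-2}$ that the induction hypothesis controls is exactly the center $Z(\bar G)$; a short commutator computation then forces the relevant $p$-th powers into $Z_{i-1}$. A pleasant feature of this route is that it never requires knowing that the quotients $G/Z_{i-1}$ are themselves Camina pairs.

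For the base case, first note that since $G/Z(G)$ is a nontrivial $p$-group it contains an element of order $p$, so I can choose $g \in G \setminus Z(G)$ with $g^p \in Z(G)$. Fix an arbitrary $n \in Z(G)$. Because $(G,Z(G))$ is a Camina pair and $g \notin Z(G)$, the elements $g$ and $gn$ are conjugate, and hence so are their $p$-th powers $g^p$ and $(gn)^p$. As $n$ is central it commutes with $g$, so $(gn)^p = g^p n^p$; thus $g^p$ is conjugate to $g^p n^p$. But both $g^p$ and $g^p n^p$ lie in $Z(G)$, and a central element is conjugate only to itself, so $g^p n^p = g^p$ and therefore $n^p = 1$. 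Since $n \in Z(G)$ was arbitrary and $Z(G)$ is abelian, $Z(G)$ is elementary abelian.

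For the inductive step, fix $i \ge 2$ and assume $Z_{i-1}/Z_{i-2}$ has exponent $p$. Set $\bar G = G/Z_{i-2}$ and write bars for images; by the induction hypothesis $Z(\bar G) = Z_{i-1}/Z_{i-2}$ has exponent $p$. Let $z \in Z_i$, so that $\bar z$ lies in the second center of $\bar G$ and hence $[\bar z, \bar g] \in Z(\bar G)$ for every $\bar g \in \bar G$. Because $[\bar z, \bar g]$ is central in $\bar G$, the standard identity gives $[\bar z^{\,p}, \bar g] = [\bar z, \bar g]^p$, and the right-hand side is trivial since $Z(\bar G)$ has exponent $p$. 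Thus $\bar z^{\,p}$ commutes with every $\bar g$, i.e.\ $\bar z^{\,p} \in Z(\bar G)$, which says precisely that $z^p \in Z_{i-1}$. Hence $Z_i/Z_{i-1}$ has exponent $p$, completing the induction; note that this argument applies uniformly up to $i = c$, including the top factor $G/Z_{c-1}$.

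I expect the base case to be the crux: the clever point is to manufacture an element $g$ outside $Z(G)$ whose $p$-th power is central, so that the Camina conjugacy $g \sim gn$ can be promoted to a conjugacy between two central elements, where it collapses to an equality. Once $Z(G)$ is known to be elementary abelian, the inductive step is routine $p$-group commutator calculus, the only points requiring care being the standard facts $[Z_i,G] \le Z_{i-1}$ and the collection identity $[a^p,g] = [a,g]^p$, valid whenever $[a,g]$ is central.
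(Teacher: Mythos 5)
The paper does not actually prove this lemma --- it is quoted as Theorem 2.2 of Macdonald's paper \cite{MacD1} --- so there is no in-paper argument to compare against. Your proof is correct and self-contained. The base case is sound: choosing $g \in G \setminus Z(G)$ with $g^p \in Z(G)$ (possible since $G/Z(G)$ is a nontrivial $p$-group), the Camina condition makes $g$ conjugate to $gn$ for every $n \in Z(G)$, hence $g^p$ conjugate to $(gn)^p = g^p n^p$, and since both lie in $Z(G)$ the conjugacy collapses to equality, giving $n^p = 1$. This is essentially the same trick the present paper uses in its first lemma to show $G$ is a $p$-group (comparing orders of $a$ and $az$), so it is very much in the spirit of the source. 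The inductive step is also fine: $Z(G/Z_{i-2}) = Z_{i-1}/Z_{i-2}$ and $Z_2(G/Z_{i-2}) = Z_i/Z_{i-2}$ are standard facts about the upper central series, and the identity $[\bar z^{\,p},\bar g] = [\bar z,\bar g]^p$ is valid because $[\bar z,\bar g]$ is central in $\bar G$, so $\bar z^{\,p}$ is forced into $Z(\bar G)$, i.e.\ $z^p \in Z_{i-1}$. One small remark: the inductive step never uses the Camina hypothesis at all --- it shows that for any $p$-group whose center has exponent $p$, every upper central factor has exponent $p$ --- so the Camina condition is needed only for the base case, exactly as you observe.
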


This second lemma (Lemma 2.1 of \cite{MacD1}) was proved by Macdonald in the more general setting where $(G,N)$ is a Camina pair and $G$ is a $p$-group.  He actually proved that $N$ is a term in both the upper and lower central series.  When $N = Z(G)$, this reduces to the following.

\begin{lemma} \cite{MacD1}
Let $(G,Z(G))$ be a Camina pair.  If $G$ has nilpotence class $c$, then $Z(G) = G_c$.
\end{lemma}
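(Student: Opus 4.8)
The plan is to establish the two inclusions $G_c \le Z(G)$ and $Z(G) \le G_c$ separately, relying on the commutator characterization of Camina pairs recorded in the introduction: for every $g \in G \setminus Z(G)$ and every $z \in Z(G)$ there is some $y \in G$ with $[y,g] = z$.

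First I would dispose of $G_c \le Z(G)$, which holds for any nilpotent group of class $c$: since $G_{c+1} = [G_c,G] = 1$, the subgroup $G_c$ is centralized by $G$, hence $G_c \le Z(G)$. For the reverse inclusion, the key observation is that one should not apply the Camina condition to an arbitrary element outside the center, but specifically to an element of $G_{c-1}$. Note first that since $(G,Z(G))$ is a Camina pair we have $Z(G) < G$, so $G$ is nonabelian and $c \ge 2$; thus $G_{c-1}$ is defined. Moreover $G_{c-1} \not\le Z(G)$, for otherwise $G_c = [G_{c-1},G] \le [Z(G),G] = 1$, contradicting that the class is exactly $c$.

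Having secured this, I would fix an element $g \in G_{c-1} \setminus Z(G)$ and apply the commutator form of the Camina hypothesis to $g$: every $z \in Z(G)$ can be written as $z = [y,g]$ for some $y \in G$. But because $g \in G_{c-1}$, every commutator $[y,g]$ lies in $[G,G_{c-1}] = G_c$. Hence each $z \in Z(G)$ lies in $G_c$, giving $Z(G) \le G_c$. Combining the two inclusions yields $Z(G) = G_c$.

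The argument is short, and the only real subtlety — what I would flag as the crux — is the choice of the test element $g$ inside $G_{c-1}$: this is precisely what forces the commutators $[y,g]$ produced by the Camina condition to land in $G_c$ rather than merely in $G'$. The remaining points (that $c \ge 2$ and that $G_{c-1}$ is not central) are routine consequences of the class being exactly $c$ together with $Z(G) < G$.
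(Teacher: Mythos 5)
Your proof is correct. Note that the paper itself gives no argument for this lemma --- it simply cites Lemma 2.1 of Macdonald's paper --- so there is no in-paper proof to compare against; you have supplied a self-contained one. Both inclusions check out: $G_c \le Z(G)$ is immediate from $[G_c,G]=1$, and your choice of test element $g \in G_{c-1}\setminus Z(G)$ (which exists because $c\ge 2$ and $G_{c-1}\le Z(G)$ would force $G_c=1$) is exactly the right move, since the Camina condition then writes every $z\in Z(G)$ as $[y,g]\in[G,G_{c-1}]=G_c$.
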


Suppose $N$ is a normal subgroup of $G$.  The character $\theta \in \irr N$ is said to be {\it fully ramified} with respect to $G/N$ if $\theta$ is $G$-invariant and $\theta^G$ has a unique irreducible constituent.  Problem 6.3 of \cite{text} shows that this there are two conditions equivalent to this conditions.  We make use of both of these equivalent conditions in the next lemma which is well-known.

\begin{lemma}
Let $(G,Z(G))$ be a Camina pair.  Every character in $\irr {G \mid Z(G)}$ is fully ramified with respect to $G/Z(G)$.  In particular, $|G:Z(G)|$ is a square.
\end{lemma}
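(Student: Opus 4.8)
The plan is to combine the character-theoretic reformulation of the Camina condition recorded in the introduction with the equivalences furnished by Problem 6.3 of \cite{text}. Since $(G,Z(G))$ is a Camina pair, every character $\chi \in \irr {G \mid Z(G)}$ vanishes on $G \setminus Z(G)$. First I would fix a nontrivial linear character $\lambda \in \irr {Z(G)}$ (such a $\lambda$ exists because $1 < Z(G)$), and let $\chi \in \irr G$ be any irreducible constituent of $\lambda^G$. Because $Z(G)$ is central, $\lambda$ is automatically $G$-invariant and $\chi_{Z(G)} = \chi(1)\lambda$, so $\chi$ lies in $\irr {G \mid Z(G)}$ and therefore vanishes on $G \setminus Z(G)$.

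The vanishing of $\chi$ off $Z(G)$ is exactly the vanishing condition among the equivalent formulations of full ramification in Problem 6.3 of \cite{text}: for an invariant $\theta \in \irr N$, the existence of a character $\chi \in \irr {G \mid \theta}$ vanishing off $N$ is equivalent to $\theta$ being fully ramified with respect to $G/N$ and to $\chi_N$ being a multiple of $\theta$. Applying this with $N = Z(G)$ and $\theta = \lambda$, I would conclude that $\lambda$ is fully ramified with respect to $G/Z(G)$. Since $\lambda$ was an arbitrary nontrivial character of $Z(G)$, and every member of $\irr {G \mid Z(G)}$ lies over such a $\lambda$, this establishes the first assertion.

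For the ``in particular'' claim, I would invoke the other equivalent condition from the same Problem 6.3 package, namely that for a fully ramified $\theta$ the unique $\chi \in \irr {G \mid \theta}$ satisfies $\chi_N = e\theta$ with $e^2 = |G:N|$. With $N = Z(G)$ and $\theta = \lambda$ linear this forces $\chi(1)^2 = |G:Z(G)|$, so $|G:Z(G)|$ is a perfect square. Alternatively, one obtains this directly from orthonormality: since $\chi$ vanishes off $Z(G)$ and $\chi_{Z(G)} = \chi(1)\lambda$, we get $|G| = \sum_{z \in Z(G)} |\chi(z)|^2 = \chi(1)^2 |Z(G)|$, whence $\chi(1)^2 = |G:Z(G)|$.

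I expect essentially no serious obstacle here, as the statement is a direct translation between the Camina hypothesis and full ramification. The only points requiring care are confirming that $\irr {G \mid Z(G)}$ consists precisely of the irreducible characters lying over the nontrivial linear characters of $Z(G)$, that each such central character is automatically $G$-invariant so that ``fully ramified'' even makes sense, and that the vanishing supplied by the Camina hypothesis matches the exact hypothesis of the relevant equivalence in Problem 6.3 of \cite{text}.
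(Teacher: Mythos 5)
Your proposal is correct and follows essentially the same route as the paper: both invoke the character-theoretic form of the Camina condition (every $\chi \in \irr{G \mid Z(G)}$ vanishes off $Z(G)$) together with the equivalences of Problem 6.3 of \cite{text}, using homogeneity of $\chi_{Z(G)}$ (automatic since $Z(G)$ is central) to conclude full ramification and $\chi(1)^2 = |G:Z(G)|$. Your added direct orthogonality computation for the square claim is a nice self-contained touch but does not change the argument in substance.
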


\begin{proof}
Since $(G,Z(G))$ is a Camina pair, we know that every character in $\irr {G \mid Z(G)}$ vanishes on $G \setminus Z(G)$.  In Problem 6.3 of \cite{text}, it is shown that vanishing on $G \setminus N$ and being homogeneous upon restriction to $N$ implies that an irreducible character is fully ramified with respect to $G/N$.  Also, it is shown in that problem that if $\chi$ is fully ramified, then $|G: Z(G)| = \chi (1)^2$.
\end{proof}

\section{Exponent bigger than $p$}

Recall that if $(G,G')$ is a Camina pair, then $G$ is a Camina group.  When $G$ is a Camina $p$-group of nilpotence class 2, then the following result was proved by Macdonald in Theorem 3.2 of \cite{MacD1}.

\begin{lemma} \label{Camina gp}
Let $G$ be a Camina group of nilpotence class 2.  Then $|Z(G)|^2 \le |G:Z (G)|$.
\end{lemma}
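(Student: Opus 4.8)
The plan is to reduce the statement to a clean question about spaces of alternating forms over $\mathbb{F}_p$, and then to settle that question with the Pfaffian together with the Chevalley--Warning theorem.

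First I would record the structural reduction. Since $G$ is a Camina group, $(G,G')$ is a Camina pair, so $Z(G)\le G'$; nilpotence class $2$ gives $G'\le Z(G)$. Hence $G'=Z(G)$, and in particular $(G,Z(G))$ is a Camina pair, so by the opening lemma $G$ is a $p$-group. Write $V=G/Z(G)$ and $W=G'=Z(G)$. By Lemma~\ref{exponent center}, $W=Z(G)$ is elementary abelian, and since the class is $2$ we have $Z_2=G$, so $V=Z_2/Z_1$ is elementary abelian as well; thus $V,W$ are $\mathbb{F}_p$-vector spaces, say of dimensions $b$ and $a$. Because $G$ has class $2$, the commutator is bilinear and central, so it induces an alternating $\mathbb{F}_p$-bilinear map $\beta\colon V\times V\to W$, $\beta(\bar x,\bar y)=[x,y]$. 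The desired inequality $|Z(G)|^2\le|G:Z(G)|$ is exactly $2a\le b$.

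Next I would translate the Camina hypothesis into the language of $\beta$. For each $0\ne v\in V$ the homomorphism $\phi_v=\beta(v,\cdot)\colon V\to W$ is surjective, since the Camina condition forces $[G,g]$ to fill all of $G'$. Fixing a basis of $W$, write $\beta=(\beta_1,\dots,\beta_a)$ with each $\beta_i$ an alternating form $V\times V\to\mathbb{F}_p$; surjectivity makes the $\beta_i$ independent, so $S=\mathrm{span}(\beta_1,\dots,\beta_a)$ has dimension $a$. Surjectivity of every $\phi_v$ says precisely that the functionals $\beta_1(v,\cdot),\dots,\beta_a(v,\cdot)$ are independent for each $v\ne0$; interchanging the two universal quantifiers, this is equivalent to saying that every nonzero form $\beta_\mu=\sum_i\mu_i\beta_i$ in $S$ is nondegenerate. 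In particular $S$ contains a nondegenerate alternating form, so $b$ is even; writing $b=2m$, the claim becomes $a\le m$.

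The engine is then a Pfaffian/Chevalley--Warning argument. Choosing a basis of $V$, each $\beta_\mu$ is a $2m\times2m$ alternating matrix whose Pfaffian $\mathrm{Pf}(\beta_\mu)$ is a homogeneous polynomial of degree $m$ in the coordinates $\mu=(\mu_1,\dots,\mu_a)$, with $\beta_\mu$ nondegenerate if and only if $\mathrm{Pf}(\beta_\mu)\ne0$. By the previous paragraph this form has no nontrivial zero over $\mathbb{F}_p$. The Chevalley--Warning theorem guarantees that a homogeneous polynomial of degree $m$ in more than $m$ variables over $\mathbb{F}_p$ has a nontrivial zero, so we must have $a\le m$. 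This is exactly $|Z(G)|=p^a\le p^m=|G:Z(G)|^{1/2}$, i.e.\ $|Z(G)|^2\le|G:Z(G)|$.

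The main obstacle is conceptual rather than computational: recognizing that the Camina condition is equivalent to the nonvanishing of a \emph{single} degree-$m$ form, the Pfaffian, on $S\setminus\{0\}$, so that Chevalley--Warning caps $\dim S$ at $m$. The supporting technical points to verify are the quantifier interchange establishing the equivalence with nondegeneracy of all of $S$, and the facts that the Pfaffian has degree exactly $m$ and that its vanishing locus is independent of the chosen basis of $V$ (these hold in every characteristic, so no separate treatment of $p=2$ is needed).
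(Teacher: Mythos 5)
Your argument is correct, but it cannot be compared step-by-step with the paper's, because the paper does not prove this lemma at all: it is quoted as Theorem 3.2 of Macdonald's paper \cite{MacD1} and used as a black box. What you have produced is a complete, self-contained proof. The reduction is right: for a class-$2$ Camina group $Z(G)\le G'\le Z(G)$ forces $G'=Z(G)$, both $V=G/Z(G)$ and $W=Z(G)$ are elementary abelian, and the commutator descends to an alternating bilinear map $\beta\colon V\times V\to W$. The translation of the Camina condition into ``every nonzero form in the $a$-dimensional span $S$ of the coordinate forms is nondegenerate'' via the quantifier interchange is valid, and the Pfaffian/Chevalley--Warning step is sound: $\mathrm{Pf}(\beta_\mu)$ is homogeneous of degree $m$ in the $a$ coordinates of $\mu$, it vanishes only at $\mu=0$ by hypothesis (in particular it is not the zero polynomial, though even that degenerate case would produce a nontrivial zero), so $a\le m$ and $|Z(G)|^2=p^{2a}\le p^{2m}=|G:Z(G)|$. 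Macdonald's original argument is a more elementary commutator/counting computation; your route through nonsingular spaces of alternating forms is slicker, works uniformly in all characteristics including $p=2$, and isolates the real content of the bound (a nonsingular space of alternating forms on a $2m$-dimensional space has dimension at most $m$), at the cost of invoking the Pfaffian and Chevalley--Warning. I would only ask you to spell out, in a final write-up, the identity $\mathrm{Pf}(P^{\mathrm T}AP)=\det(P)\,\mathrm{Pf}(A)$ justifying basis-independence, and the one-line verification that surjectivity of each $\phi_v$ is equivalent to linear independence of the functionals $\beta_i(v,\cdot)$.
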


Recall that if $(G,Z(G))$ is a Camina pair, then $Z(G) \le G'$. Thus, we may assume that $Z(G) < G'$.

Let $g \in G \setminus Z(G)$.  We set $D_G (g) = \{ x \in G \mid [g,x] \in Z(G) \}$.  It is not very hard to see that $C_G (g) \le D_G (g)$ and $D_G (g)/C_G (g) = C_{G/Z(G)} (gZ(G))$.  It follows that $D_G (g)$ is a subgroup of $G$.  When the group $G$ is clear, we write $C (g)$ for $C_G (g)$ and $D (g)$ for $D_G (g)$.

\begin{lemma}\label{cents}
Let $(G,Z(G))$ be a Camina pair where $G$ is a $p$-group.  If $g \in G \setminus Z(G)$, then $D (g)/C (g) \cong Z(G)$.  In particular, $D (g)' \le C (g)$.
\end{lemma}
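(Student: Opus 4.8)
The plan is to realize $D(g)/C(g)$ as an explicit isomorphic copy of $Z(G)$ via the commutator map, and then read off the derived-subgroup statement as an immediate consequence. Define $\phi \colon D(g) \to Z(G)$ by $\phi(x) = [g,x]$. This is well defined because $x \in D(g)$ means precisely that $[g,x] \in Z(G)$.

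First I would verify that $\phi$ is a group homomorphism. Using the standard commutator identity $[g,xy] = [g,y]\,[g,x]^y$ together with the fact that $[g,x] \in Z(G)$ is central (so $[g,x]^y = [g,x]$) and that $Z(G)$ is abelian, one obtains $\phi(xy) = [g,y]\,[g,x] = [g,x]\,[g,y] = \phi(x)\phi(y)$. The kernel is exactly $\{x \in D(g) \mid [g,x] = 1\} = C(g)$, so $\phi$ induces an injection $D(g)/C(g) \hookrightarrow Z(G)$.

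The main point is surjectivity, and this is where the Camina hypothesis enters. I would invoke the commutator form of the definition: since $(G,Z(G))$ is a Camina pair and $g \in G \setminus Z(G)$, for every $n \in Z(G)$ there is some $y \in G$ with $[y,g] = n$. Any such $y$ automatically lies in $D(g)$, since $[g,y] = [y,g]^{-1} = n^{-1} \in Z(G)$, and then $\phi(y) = [g,y] = n^{-1}$. As $n$ ranges over $Z(G)$ so does $n^{-1}$, so $\phi$ is onto, and hence induces an isomorphism $D(g)/C(g) \cong Z(G)$, giving the first assertion.

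Finally, since $Z(G)$ is abelian (indeed elementary abelian by Lemma \ref{exponent center}), the isomorphic quotient $D(g)/C(g)$ is abelian, which is equivalent to $D(g)' \le C(g)$, the second assertion. I do not anticipate a genuine obstacle here: the only care needed is the bookkeeping with commutator conventions, so that the image of $\phi$ is correctly identified with all of $Z(G)$ through the commutator characterization of the Camina property.
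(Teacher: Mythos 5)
Your proposal is correct and follows essentially the same route as the paper: the commutator map $D(g) \to Z(G)$, well-defined by the definition of $D(g)$, a homomorphism because its values are central, surjective by the commutator form of the Camina condition, with kernel $C(g)$, followed by the first isomorphism theorem. The only difference is cosmetic bookkeeping with the commutator convention, and your explicit remark that abelianness of $Z(G)$ yields $D(g)' \le C(g)$ is exactly the intended reading of the paper's ``in particular.''
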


\begin{proof}
Consider the map $D (g) \rightarrow Z(G)$ by $d \mapsto [d,g]$.  By the definition of $D (g)$, we know that this map is well-defined.  Since $Z (G)$ is central, this map is a homomorphism.  To see that this map is onto, suppose $z \in Z (G)$.  By the Camina condition, there exists $y \in G$ so that $[y,g] = z$.  But now, $[y,g] \in Z(G)$ implies that $y \in D (g)$.  Finally, observe that $C (g)$ is the kernel of this homomorphism.  We now apply the first isomorphism theorem to see that $D (g)/C (g) \cong Z(G)$.
\end{proof}

We now obtain a bound of $|Z (G)|$ in terms of $|G:Z (G)|$ when $G/Z(G)$ does not have exponent $p$.  This yields Theorem \ref{fifth}.

\begin{theorem}\label{exponent quotient}
Let $(G,Z(G))$ be a Camina pair where $G$ is a $p$-group.  If $G/Z(G)$ has exponent $p^n$ with $n \ge 1$, then $|Z (G)|^n p^n \le |G:Z (G)|$.  In particular, $|Z (G)|^n < |G:Z (G)|$.
\end{theorem}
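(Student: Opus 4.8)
The plan is to exploit that $Z(G)$ is elementary abelian together with Lemma \ref{cents}, which tells us that passing from $C(h)$ to $D(h)$ enlarges the centralizer by a factor of exactly $|Z(G)|$ for each non-central element $h$. The idea is to start from a single element whose image generates a cyclic subgroup of order equal to the exponent $p^n$, and to climb the chain of its $p$-power centralizers, picking up a factor of $|Z(G)|$ at each step.

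First I would fix $g \in G$ with $gZ(G)$ of order $p^n$ in $G/Z(G)$; such a $g$ exists precisely because $G/Z(G)$ has exponent $p^n$. Then for each $i$ with $0 \le i \le n-1$ the element $g^{p^i}$ lies in $G \setminus Z(G)$ (its image has order $p^{n-i} > 1$), so Lemma \ref{cents} applies and gives $|D(g^{p^i}) : C(g^{p^i})| = |Z(G)|$.

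The crucial step, and the one that makes the whole argument go through, is the containment $D(g^{p^i}) \le C(g^{p^{i+1}})$. To see this, take $x \in D(g^{p^i})$, so that $(g^{p^i})^x = g^{p^i} z$ for some $z \in Z(G)$. Raising to the $p$-th power and using that $z$ is central, $(g^{p^{i+1}})^x = (g^{p^i} z)^p = g^{p^{i+1}} z^p$. Here is where the elementary abelianness of $Z(G)$ (Lemma \ref{exponent center}) is essential: $z^p = 1$, so $x$ centralizes $g^{p^{i+1}}$. I regard this as the main obstacle, in the sense that without $Z(G)$ having exponent $p$ the factor $z^p$ would survive and the chain would collapse.

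Finally I would assemble the telescoping inequality. Since $D(g^{p^i}) \le C(g^{p^{i+1}})$ and $|D(g^{p^i})| = |Z(G)| \cdot |C(g^{p^i})|$, we obtain $|C(g^{p^{i+1}})| \ge |Z(G)| \cdot |C(g^{p^i})|$ for $0 \le i \le n-1$. Iterating these $n$ inequalities and using that $C(g^{p^n}) = G$ (because $g^{p^n} \in Z(G)$) yields $|G| \ge |Z(G)|^n \, |C(g)|$. To close the gap I bound $|C(g)|$ from below by $|\langle g \rangle Z(G)| = p^n |Z(G)|$, which holds because $\langle g \rangle Z(G)/Z(G)$ has order $p^n$ and $g$ together with $Z(G)$ lies in $C(g)$. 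Combining the two bounds gives $|G| \ge p^n |Z(G)|^{n+1}$, that is, $|Z(G)|^n p^n \le |G:Z(G)|$; the concluding ``in particular'' then follows immediately since $p^n > 1$.
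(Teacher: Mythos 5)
Your proof is correct and follows essentially the same route as the paper's: the same chain of containments $D(g^{p^i}) \le C(g^{p^{i+1}})$ (justified the same way, via $Z(G)$ having exponent $p$), the same use of Lemma \ref{cents} to extract a factor of $|Z(G)|$ at each step, and the same final telescoping with the extra factor $p^n$ coming from $\langle g \rangle Z(G) \le C(g)$.
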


\begin{proof}
Since $G/Z(G)$ has exponent $p^n$, there exists an element $x \in G \setminus Z(G)$ so that $x^{p^{n-1}} \not\in Z(G)$ but $x^{p^n} \in Z(G)$.  We claim that $C (x^{p^{i+1}}) \ge D (x^{p^i})$ for $i = 0, \dots, n-1$.  Suppose that $a \in D (x^{p^i})$.  Then $[a,x^{p^i}] \in Z(G)$.  By Lemma \ref{exponent center}, we know that $[a,x^{p^i}]^p = 1$.  Since $[a,x^{p^i}]$ is central, we have $[a,x^{p^{i+1}}] = [a,x^{p^i}]^p = 1$.  It follows that $a \in C_G(x^{p^{i+1}})$.  By Lemma \ref{cents}, we know that $D (x^{p^i})/C (x^{p^i}) \cong Z (G)$ for $i = 0, \dots, n-1$.  Finally, we know that $x \in C (x)$.  It follows that $p^n$ divides $|C (x):Z (G)|$ and $|D (x^{p^i}):C (x^{p^i})| = |Z (G)|$ for $i = 0, \dots, n-1$.  Finally, we have
$$
|G:Z(G)| \ge \left(\prod_{i=0}^{n-1} |D (x^{p^i}):C (x^{p^i})| \right) |C (x):Z (G)| \ge \prod_{i=0}^{n-1} |Z (G)| p^n.
$$
It is not difficult to see that this implies that $|G: Z (G)| \ge |Z(G)|^n p^n.$
\end{proof}

%
%

We now apply Theorem \ref{exponent quotient} to $2$-groups.

\begin{corollary}
Let $(G,Z (G))$ be a Camina pair where $G$ is a $2$-group.  Then $|Z (G)|^2 \le |G:Z (G)|$.  Furthermore, if equality holds, then $G$ is a Camina group.
\end{corollary}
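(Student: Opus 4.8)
The plan is to reduce everything to Theorem~\ref{exponent quotient} with $p=2$, splitting on the exponent $2^n$ of $G/Z(G)$ (here $n\ge 1$, since $G$ is nonabelian). The guiding observation is that the bound coming from Theorem~\ref{exponent quotient} is comfortably strong once $n\ge 2$, whereas the remaining case $n=1$ forces $G$ to be a Camina group, where the desired inequality is already known.

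First I would handle $n\ge 2$. Theorem~\ref{exponent quotient} gives $|Z(G)|^n\,2^n\le |G:Z(G)|$. Because $Z(G)>1$ and $n\ge 2$ we have $|Z(G)|^2\le |Z(G)|^n$, and since $2^n>1$ the inequality is in fact strict: $|Z(G)|^2<|Z(G)|^n\,2^n\le |G:Z(G)|$. Thus not only does the bound hold in this case, but equality is impossible.

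Next I would treat $n=1$. Here $G/Z(G)$ has exponent $2$, hence is elementary abelian and in particular abelian, so $G'\le Z(G)$. Since $Z(G)\le G'$ for any Camina pair $(G,Z(G))$, this yields $G'=Z(G)$, so $(G,G')$ is a Camina pair and $G$ is a Camina group. Moreover $G'\le Z(G)$ shows that $G$ has nilpotence class at most $2$, and as $G$ is nonabelian its class is exactly $2$. Lemma~\ref{Camina gp} then gives $|Z(G)|^2\le |G:Z(G)|$.

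Putting the cases together, $|Z(G)|^2\le |G:Z(G)|$ holds in general, and the strictness obtained in the case $n\ge 2$ confines any equality to the case $n=1$, where we have just shown $G$ is a Camina group; this establishes the ``furthermore'' clause. There is no real obstacle here beyond careful bookkeeping: the two points that must be checked with some care are that the $n\ge 2$ estimate is strict (so that the equality case is pinned down to $n=1$) and that the standard containment $Z(G)\le G'$ is precisely what upgrades ``$G/Z(G)$ elementary abelian'' to ``$G$ is a Camina group.''
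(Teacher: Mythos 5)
Your proposal is correct and follows essentially the same route as the paper: both reduce to Theorem~\ref{exponent quotient} when $G/Z(G)$ has exponent at least $4$ (equivalently, when $Z(G)<G'$, since exponent~$2$ forces $G/Z(G)$ abelian) and fall back on the known Camina-group bound of Lemma~\ref{Camina gp} otherwise. Your version merely organizes the case split by the exponent of $G/Z(G)$ rather than by whether $G'=Z(G)$, and spells out the strictness that pins the equality case to the Camina-group situation.
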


\begin{proof}
If $G' = Z (G)$, then $G$ is a Camina group, and we know that the result holds.  Thus, we may assume $G$ is not a Camina group and $Z (G) < G'$.  In particular, $G/Z (G)$ is not abelian. This implies that $G/Z (G)$ does not have exponent $2$.  By Theorem \ref{exponent quotient}, we know that $|Z (G)|^2 < |G:Z (G)|$.  This proves the corollary.
\end{proof}

In particular, this says that if $(G,Z (G))$ is a Camina pair where $|G:Z (G)| = 16$ and $G$ is not a Camina group, then $|Z (G)| = 2$.  Hence, $|G| = 32$.  Looking through the small groups library, we have found that there are 5 such groups.  In the small groups library in Magma they are ${\rm SmallGroup} (32,i)$ where $i = 6, 7, 8, 43, 44$.

Henceforth, we may assume that $p$ is odd and that $G/Z (G)$ has exponent $p$.

\section{Centralizers of $G/Z(G)$}

In this section, we consider the centralizers in $G/Z (G)$.  We begin with a sufficient condition for such a centralizer to be abelian.  In particular, we are working to show that $|Z (G)| \le |G:Z (G)|^{1/2}$ when $|G:Z (G)|$ is small.

\begin{lemma} \label{D-quo}
Let $(G,Z (G))$ be a Camina pair.  If $a \in G \setminus Z (G)$ satisfies $C (a) \cap G' = Z(G)$, then $D (a)/Z (G)$ is abelian.
\end{lemma}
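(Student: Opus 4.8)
The plan is to recast the conclusion as a statement about a derived subgroup: the quotient $D(a)/Z(G)$ is abelian precisely when $D(a)' \le Z(G)$. So the entire task reduces to pinning $D(a)'$ inside $Z(G)$, and I would attack that directly rather than by manipulating the quotient.

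To do this I would exploit that $D(a)'$ sits inside two natural subgroups simultaneously. First, since $D(a)$ is a subgroup of $G$, every commutator of elements of $D(a)$ is in particular a commutator of elements of $G$, so trivially $D(a)' \le G'$. Second, Lemma \ref{cents} already records that $D(a)' \le C(a)$; this is exactly the ``in particular'' clause there, coming from the fact that $D(a)/C(a) \cong Z(G)$ is abelian. These are the only two ingredients needed, and both are available to us without further work.

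Combining the two containments gives $D(a)' \le C(a) \cap G'$, at which point the hypothesis $C(a) \cap G' = Z(G)$ immediately forces $D(a)' \le Z(G)$, which is exactly the desired conclusion. Thus $D(a)/Z(G)$ is abelian.

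The argument is genuinely short, so I do not expect a real obstacle; the only point requiring care is the logical direction. We are not assuming $D(a)/Z(G)$ is abelian and extracting something from it — rather, the hypothesis on $C(a) \cap G'$ is precisely what squeezes the two a priori containments $D(a)' \le G'$ and $D(a)' \le C(a)$ down to $Z(G)$. The real content of the lemma is just the observation that $D(a)'$ is trapped in the intersection $C(a) \cap G'$, so that any hypothesis controlling that intersection controls $D(a)'$ as well.
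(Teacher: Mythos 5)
Your proof is correct and is essentially identical to the paper's: both arguments combine $D(a)' \le C(a)$ (from Lemma \ref{cents}) with the trivial containment $D(a)' \le G'$ to trap $D(a)'$ inside $C(a) \cap G' = Z(G)$. Nothing is missing.
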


\begin{proof}
Observe that $D (a)' \le C (a)$ since by Lemma \ref{cents} $D (a)/C (a)$ is isomorphic to an abelian group.  Obviously, $D (a)' \le G'$.  It follows that $D (a)' \le C (a) \cap G' = Z (G)$.  This proves the result.
\end{proof}

We note that is not difficult to see that $[Z_2,G'] = 1$.  In fact, $[Z_i,G_i] = 1$ for all $i$.  (This is Hauptsatz III.2.11 of Huppert's Endlichen Gruppen I.)  This can also be easily proved via 3 Subgroups Lemma.  Since $G' \le G' Z_2$, Theorem \ref{first} is a consequence of this lemma.

\begin{lemma} \label{Z_2G'}
Let $(G,Z (G))$ be a Camina pair.  If $G/Z (G)$ is not abelian, then $|G: G' Z_2| \ge |Z (G)|$.
\end{lemma}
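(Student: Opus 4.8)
The plan is to produce $|Z(G)|$ ``independent'' linear characters of $G$ that are trivial on $Z_2$. Since $|G:G'Z_2|$ equals the number of linear characters of $G$ trivial on $Z_2$ (these are exactly the characters in $\widehat{G/G'Z_2}$), it suffices to build a surjective homomorphism from $G$ onto a group of order at least $|Z(G)|$ whose kernel contains $G'Z_2$. By the reduction already in force I may assume $p$ is odd and $G/Z(G)$ has exponent $p$, so that $G/G'Z_2$ is elementary abelian and the target can be taken to be $Z(G)$ itself.

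The basic tool is the commutator map. For $a\in Z_2$ the map $\phi_a\colon G\to Z(G)$, $g\mapsto[a,g]$, is a genuine homomorphism, since $[a,g]\in Z(G)$ (because $a\in Z_2$) and then $[a,gh]=[a,g][a,h]$ as the values are central. It is surjective by the Camina condition (for any $z\in Z(G)$ there is $y$ with $[y,a]=z$, and $a\notin Z(G)$), and its kernel $C(a)$ contains $G'$ because $[a,G']\le[Z_2,G']=1$. Thus $\phi_a$ descends to a surjection $G/G'\twoheadrightarrow Z(G)$ (reproving Theorem \ref{first}), and the only obstruction to descending it further to $G/G'Z_2$ is the image $\phi_a(Z_2)=[a,Z_2]$. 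Quotienting that out yields $|G:G'Z_2|\ge|Z(G):[a,Z_2]|$ for every $a\in Z_2\setminus Z(G)$.

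Consequently, if some $a\in Z_2\setminus Z(G)$ centralizes $Z_2$, i.e.\ if $Z(Z_2)>Z(G)$, then $[a,Z_2]=1$ and $\phi_a$ descends to a surjection $G/G'Z_2\twoheadrightarrow Z(G)$, finishing the proof. Note $Z(G)\le Z(Z_2)$ always, and $Z_2$ has class at most $2$ (as $Z_2/Z(G)$ is central in $G/Z(G)$, so $Z_2'\le Z(G)$), so in particular the argument applies at once whenever $Z_2$ is abelian. The main obstacle is therefore the case $Z(Z_2)=Z(G)$, in which $Z_2$ is non-abelian with the smallest possible center and no single commutator map vanishes on all of $Z_2$; here the single-element bound degenerates (for $Z_2$ extraspecial it gives nothing), so I expect this case to be the crux.

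To handle it I would climb one level of the upper central series. For $a\in Z_3$ the map $g\mapsto[a,g]Z(G)$ is a homomorphism $G\to Z_2/Z(G)$ that vanishes on $G'$ (since $[Z_3,G']\le Z(G)$ by the standard property of the central series) and on $Z_2$ (since $[Z_3,Z_2]\le[G,Z_2]\le Z(G)$), so each such map factors through $G/G'Z_2$ and contributes to the space of characters sought. Assembling these into the commutator pairing $(G/G'Z_2)\times(Z_3/Z_2)\to Z_2/Z(G)$, and combining its non-degeneracy in the second variable with the constraints that the Camina/full-ramification hypothesis imposes on the alternating form $Z_2/Z(G)\times Z_2/Z(G)\to Z(G)$, should recover a subspace of dimension at least $\log_p|Z(G)|$. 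Making this assembly genuinely yield $\log_p|Z(G)|$ independent characters, rather than merely a relation among the various ranks, is the hard part; I anticipate it will require either an induction along the upper central series or a careful accounting of the radicals of the forms $\lambda\circ\beta$ as $\lambda$ ranges over the nonprincipal characters of $Z(G)$.
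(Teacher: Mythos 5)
Your first two paragraphs are correct and already contain the paper's entire mechanism: for $a \in Z_2 \setminus Z(G)$ the map $g \mapsto [a,g]$ is a surjective homomorphism onto $Z(G)$ with kernel $C(a) \supseteq G'$, so everything reduces to producing one such $a$ that also centralizes $Z_2$. But the proof as written is incomplete: you isolate the case $Z(Z_2) = Z(G)$ as the ``crux'' and offer only a speculative program (climbing to $Z_3$, assembling commutator pairings) rather than an argument, and that program is where you stop. The gap is genuine, but it closes in one line, and the key is the hypothesis you never used: $G/Z(G)$ is not abelian. That hypothesis makes $G'/Z(G)$ a nontrivial normal subgroup of the nilpotent group $G/Z(G)$, so it meets the center $Z_2/Z(G)$ nontrivially; hence $G' \cap Z_2 > Z(G)$. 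Now choose $a \in (G' \cap Z_2) \setminus Z(G)$. The identity $[G',Z_2]=1$ --- the very fact you already invoke to get $G' \le C(a)$ --- shows that such an $a$ centralizes $Z_2$ as well as being centralized by $G'$, so $G'Z_2 \le C(a)$ and your first-paragraph count $|G:G'Z_2| \ge |G:C(a)| = |Z(G)|$ finishes the proof. In particular your feared case $Z(Z_2)=Z(G)$ can never occur under the lemma's hypothesis, since $G' \cap Z_2 \le Z(Z_2)$.

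This is exactly the paper's proof: it takes $a \in (G' \cap Z_2)\setminus Z(G)$, notes $D(a) = G$ so that $|G:C(a)| = |Z(G)|$ by Lemma \ref{cents}, and observes that $G'Z_2 \le C(a)$ because $[Z_2,G']=1$. Two smaller remarks: the reduction to odd $p$ and exponent $p$ in your opening is unnecessary (the commutator argument needs neither), and an unused hypothesis in a finished proof should always be treated as a warning sign --- here it was pointing directly at the missing step.
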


\begin{proof}
Since $G/Z (G)$ is nilpotent and not abelian, we know that $G'/Z (G) \cap Z(G/Z (G)) > Z (G)/Z (G)$.  This implies that $G' \cap Z_2 > Z (G)$.  Hence, there exists $a \in G' \cap Z_2 \setminus Z(G)$.  We see that $D (a) = G$, so by Lemma \ref{cents}, we have $G/C (a) \cong Z (G)$.  Since $G'$ centralizes $Z_2$, we have $G' \le C (a)$, and since $Z_2$ centralizes $G'$, we have $Z_2 \le C (a)$.  This implies that $G' Z_2 \le C (a)$ and thus, $|G:G' Z_2| \ge |G:C (a)| = |Z (G)|$.
\end{proof}

We now use a condition on centralizers to bound $|Z(G)|$ in terms of $|G:Z (G)|$.

\begin{lemma} \label{index p}
Let $(G,Z (G))$ be a Camina pair.  If there exists $a \in G \setminus Z (G)$ so that $D (a)/Z (G)$ is abelian and $|G:D (a)| = p$, then $|Z (G)|^2 \le |G:Z (G)|$.
\end{lemma}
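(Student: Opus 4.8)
The plan is to work entirely inside the quotient $P = G/Z(G)$, whose order is a perfect square, and to extract three integer parameters whose relations force the bound. Write $D = D(a)$; the hypothesis says $A := D/Z(G)$ is an abelian subgroup of index $p$ in $P$, and in particular $P$ is nonabelian (otherwise $D(a) = G$, contradicting $|G:D(a)| = p$). Fix $b \in G \setminus D$. Set $p^k = |Z(G)|$, $p^g = |G':Z(G)| = |P'|$, $p^w = |Z_2:Z(G)| = |Z(P)|$ and $p^n = |D:Z(G)| = |A|$, so that $|G:Z(G)| = p^{n+1}$ and the target $|Z(G)|^2 \le |G:Z(G)|$ reads $n \ge 2k-1$.

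First I would record the standard facts about a $p$-group with an abelian maximal subgroup. Since $G/D$ is cyclic, $G' \le D$; since $A$ is maximal abelian in the nonabelian group $P$, one gets $Z(P) \le A$, i.e. $Z_2 \le D$. The map $\mu : D \to P'$, $d \mapsto [d,b]Z(G)$, is a homomorphism (here $D/Z(G)$ abelian and $G' \le D$ force the cross term $[[d_1,b],d_2]$ into $[D,D] \le Z(G)$), it is onto $P' = [A,bZ(G)]$, and its kernel is exactly $\{d \in D : [d,b] \in Z(G)\} = Z_2$. Hence $D/Z_2 \cong P'$, which gives the key identity $n = g+w$ (equivalently $|P:Z(P)| = p|P'|$).

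The second ingredient is Lemma \ref{Z_2G'}, applicable because $P$ is nonabelian: $|G:G'Z_2| \ge |Z(G)|$. Writing $p^t = |(G'\cap Z_2):Z(G)| = |P'\cap Z(P)|$ and using $G', Z_2 \le D$, a short index computation gives $|G:G'Z_2| = p\cdot|D:G'Z_2| = p^{1 + n - g - w + t} = p^{1+t}$, the last equality by $n=g+w$. Comparing with the lemma yields $t \ge k-1$. Since trivially $t \le g$ and $t \le w$, I conclude $g \ge k-1$ and $w \ge k-1$, whence $n = g+w \ge 2k-2$.

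Finally I would invoke that $|G:Z(G)| = p^{n+1}$ is a perfect square for every Camina pair, so $n$ is odd; combined with $n \ge 2k-2$ this forces $n \ge 2k-1$, that is, $|Z(G)|^2 \le |G:Z(G)|$. The main obstacle is the bookkeeping that ties the three parameters together: the identity $n = g+w$ coming from the abelian maximal subgroup is precisely what makes Lemma \ref{Z_2G'} yield $t \ge k-1$ with the correct single power of $p$, and the parity upgrade from $2k-2$ to $2k-1$, supplied for free by the square condition, is what closes the gap that the inequalities alone leave open. It is worth noting that this argument uses neither that $p$ is odd nor that $G/Z(G)$ has exponent $p$, so it does not lean on the running assumptions of the section.
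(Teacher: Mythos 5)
Your proof is correct, but it takes a genuinely different route from the one in the paper. The paper's argument chooses $b \in G \setminus D(a)$, uses the abelianness of $D(a)/Z(G)$ together with $G = \langle b \rangle D(a)$ to see that $D(a) \cap D(b)$ centralizes $G$ modulo $Z(G)$ and hence lies in $Z_2$, and then chains indices: $|Z(G)| \le |G:G'Z_2| \le |G:D(a) \cap D(b)| = p\,|G:D(b)| \le |C(b):Z(G)|\,|G:D(b)|$; multiplying through by $|D(b):C(b)| = |Z(G)|$ (Lemma \ref{cents}) gives $|Z(G)|^2 \le |G:Z(G)|$ outright. You instead package the hypothesis into the classical identity $|P:Z(P)| = p|P'|$ for a nonabelian $p$-group $P$ with an abelian maximal subgroup (your homomorphism $\mu$ with kernel $Z_2$ and image $P'$ is a correct derivation of it), convert Lemma \ref{Z_2G'} into the statement $|(G' \cap Z_2):Z(G)| \ge |Z(G)|/p$, and bound $|G':Z(G)|$ and $|Z_2:Z(G)|$ from below separately; the inequalities alone yield only $|Z(G)|^2 \le p\,|G:Z(G)|$, and you need the square condition on $|G:Z(G)|$ (the fully ramified lemma) to remove the stray factor of $p$. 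Both proofs rest on Lemma \ref{Z_2G'} as the essential external input; the paper's version is shorter and never invokes the square condition, while yours isolates the structural identity $|D(a):Z_2| = |G':Z(G)|$ and shows in addition that $G' \cap Z_2$ is large, information the paper's chain of inequalities does not record. Your closing remark that neither $p$ odd nor exponent $p$ is used applies equally to the paper's proof, so it does not distinguish the two arguments.
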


\begin{proof}
There exists $b \in G \setminus D (a)$.  Observe that $G = \langle b \rangle D (a) = D (b) D (a)$.  It follows that $D (a) \cap D (b)$ will centralize $G$ modulo $Z(G)$, so $D (a) \cap D (b) \le Z_2$.  Also, $|G:D (a) \cap D (b)| = |G:D (a)||D (a):D (a) \cap D (b)| = |G:D (a)||G:D (b)| = p|G:D (b)|$.  Observe that $|C (b):Z (G)| \ge p$.  Also, we may apply Lemma \ref{Z_2G'} to see that $|Z (G)| \le |G:Z_2|$.  Together, these equations and inequalities imply that $|Z (G)| \le |G:Z_2| \le |G:D (a) \cap D (b)| \le |G:D (b)||C (b):Z (G)|$.  By Lemma \ref{cents}, we know that $|D (b):C (b)| = |Z (G)|$, and we conclude that $|Z (G)|^2 \le |G:Z (G)|$.
\end{proof}

As a corollary, we obtain our bound when $|G':Z (G)| = p$.

\begin{corollary}
Let $(G,Z (G))$ be a Camina pair.  If $|G':Z (G)| = p$ where $p$ is a prime, then $|Z (G)|^2 \le |G:Z (G)|$.
\end{corollary}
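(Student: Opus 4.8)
The plan is to manufacture a single element $a \in G \setminus Z(G)$ to which Lemma \ref{index p} applies, i.e.\ with $D(a)/Z(G)$ abelian and $|G:D(a)| = p$; once I have such an $a$, the conclusion $|Z(G)|^2 \le |G:Z(G)|$ is immediate. For the abelianness I intend to invoke Lemma \ref{D-quo}, which asks for $C(a) \cap G' = Z(G)$. Since $Z(G) \le C(a) \cap G' \le G'$ and $|G':Z(G)| = p$, the intermediate subgroup $C(a) \cap G'$ can only be $Z(G)$ or $G'$, so the hypothesis of Lemma \ref{D-quo} is \emph{equivalent} to $a$ not centralizing $G'$. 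Thus the whole problem reduces to finding one $a$ that simultaneously fails to centralize $G'$ and satisfies $|G:D(a)| = p$.

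To control the index $|G:D(a)|$, I would first record that $G'/Z(G)$ is a normal subgroup of order $p$ in the $p$-group $G/Z(G)$, hence central, so $G' \le Z_2$ and $G/Z(G)$ has nilpotence class $2$; note also $G/Z(G)$ is nonabelian since $|G':Z(G)| = p > 1$. In a class-$2$ group whose derived subgroup has order $p$, for any $\bar a = aZ(G) \notin Z(G/Z(G))$ the map $\bar x \mapsto [\bar a, \bar x]$ is a nontrivial homomorphism of $G/Z(G)$ onto $G'/Z(G)$, with kernel $C_{G/Z(G)}(\bar a) = D(a)/Z(G)$. Consequently $|G:D(a)| = p$ for \emph{every} $a \in G \setminus Z_2$, which disposes of the index condition cheaply.

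The main obstacle is arranging both conditions at once: I need an $a$ outside $Z_2$ that nonetheless does not centralize $G'$. I would resolve this with a covering argument. Choose any $g \in G' \setminus Z(G)$; since $g \notin Z(G)$ its centralizer $C_G(g)$ is a proper subgroup, and since $G/Z(G)$ is nonabelian, $Z_2$ is a proper subgroup. As a group is never the union of two proper subgroups, I may pick $a \in G \setminus (Z_2 \cup C_G(g))$. Then $a \notin Z_2$ forces $|G:D(a)| = p$, while $a \notin C_G(g)$ means $[a,g] \ne 1$, so $a$ does not centralize $G'$ and hence $C(a) \cap G' = Z(G)$, making $D(a)/Z(G)$ abelian by Lemma \ref{D-quo}. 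Applying Lemma \ref{index p} to this $a$ gives $|Z(G)|^2 \le |G:Z(G)|$, completing the proof.
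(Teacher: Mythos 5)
Your proof is correct and follows essentially the same route as the paper: use the fact that $G'/Z(G)$ is central in $G/Z(G)$ so the commutator map $x \mapsto [a,x]Z(G)$ forces $|G:D(a)| \le p$, find an element not centralizing $G'$ so that $C(a)\cap G' = Z(G)$, and then apply Lemma \ref{D-quo} followed by Lemma \ref{index p}. The only difference is cosmetic: you are slightly more careful than the paper in guaranteeing that $|G:D(a)|$ equals $p$ rather than $1$, via the two-proper-subgroups covering argument, whereas the paper leaves that step implicit.
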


\begin{proof}
Since $|G':Z (G)| = p$, we know that $G'/Z (G)$ is central in $G/Z (G)$.  For any element $g \in G$, the map $x \mapsto [g,x]Z(G)$ will be a homomorphism from $G$ to $G'/Z(G)$ whose kernel is $D (g)$.  This implies that $|G:D (g)| \le p$ for every $g \in G$.  Since $Z (G) < G'$, there exists $g \in G$ so that $C (g) \cap G' < G'$.  This implies that $C (g) \cap G' = Z (G)$ since $|G': Z(G)| = p$.  By Lemma \ref{D-quo}, $D (g)/Z(G)$ is abelian.  We are now done by Lemma \ref{index p}.
\end{proof}

This yields our conclusion when $|G:Z (G)| \le p^6$.

\begin{corollary} \label{m=2}
Let $(G,Z(G))$ be a Camina pair where $G$ is a $p$-group for some prime $p$.  Then either $|Z(G)|^2 \le |G:Z(G)|$ or $|Z(G)|p^3 \le |G:Z(G)|$.
\end{corollary}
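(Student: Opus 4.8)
The plan is to parametrize the two target inequalities and reduce everything to a single hard case. Write $z = |Z(G)|$, $r = |G':Z(G)|$, and $q = |G:G'|$, so that $|G:Z(G)| = qr$. By Theorem \ref{first} we have $z \le q$, and by the standing reduction of the previous section we may assume $p$ is odd and $G/Z(G)$ has exponent $p$; recall also that $|G:Z(G)|$ is a square, so $q$, $r$, $z$ are all powers of $p$. In this notation the first alternative $|Z(G)|^2 \le |G:Z(G)|$ reads $z^2 \le qr$ and the second $|Z(G)|p^3 \le |G:Z(G)|$ reads $zp^3 \le qr$.

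First I would dispose of the cases where $r$ is not too close to $z$. If $r \ge z$, then $z^2 \le zr \le qr$ using $z \le q$, so the first alternative holds; if instead $r \ge p^3$, then $zp^3 \le zr \le qr$, so the second alternative holds. This leaves $r < z$ together with $r < p^3$, i.e. $r \in \{1, p, p^2\}$ and $r < z$. If $r = 1$ then $Z(G) = G'$, so $G$ is a Camina group, necessarily of class $2$, and Lemma \ref{Camina gp} gives the first alternative. If $r = p$, this is exactly the preceding corollary, which again gives the first alternative. Thus the entire statement reduces to the single case $r = p^2$ with $z > p^2$; equivalently $z \ge p^3$, since $z$ is a power of $p$.

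In this remaining case I would argue by contradiction, assuming both alternatives fail: $z^2 > qp^2$ and $zp^3 > qp^2$, the latter giving $q < zp$. Together with $z \le q$ and the fact that $q$ and $z$ are powers of $p$, this forces $q = z$, that is, equality $|G:G'| = |Z(G)|$ in Theorem \ref{first}. Tracing this back through Lemma \ref{Z_2G'} (where $z \le |G:G'Z_2| \le |G:G'|$), equality forces $G'Z_2 = G'$, hence $Z_2 \le G'$, so that $Z_2/Z(G) \le G'/Z(G)$; moreover every $a \in Z_2 \setminus Z(G)$ then satisfies $D(a) = G$ and $C(a) = G'$, and the commutator map $x \mapsto [a,x]$ induces an isomorphism $G/G' \to Z(G)$. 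My aim would be to manufacture an element to which Lemma \ref{index p} applies, namely an $a$ with $|G:D(a)| = p$ and $C(a) \cap G' = Z(G)$: then Lemma \ref{D-quo} makes $D(a)/Z(G)$ abelian, Lemma \ref{index p} yields $z^2 \le |G:Z(G)| = qp^2$, and this contradicts $z^2 > qp^2$.

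The hard part is producing such an $a$ when $r = p^2$. Unlike in the $r = p$ corollary I cannot simply reuse the homomorphism $x \mapsto [g,x]Z(G)$, both because $G'/Z(G)$ need not be central in $G/Z(G)$ and, more seriously, because any index-$p$ subgroup of $G/Z(G)$ automatically contains the derived subgroup $G'/Z(G)$. Hence $|G:D(a)| = p$ already forces $G' \le D(a)$, and the condition $C(a) \cap G' = Z(G)$ becomes the requirement that the induced map $G'/Z(G) \to Z(G)$, $\overline{g} \mapsto [a,g]$, be injective. I would therefore work inside $\overline{G} = G/Z(G)$, a group of exponent $p$ with $|\overline{G}'| = p^2$ and $Z_2 \le G'$ in the equality case, and analyze the non-central elements of class size $p$ together with this pairing $G'/Z(G) \to Z(G)$, invoking the full-ramification/vanishing property of the Camina pair to show either that a suitable $a$ exists (forcing the first alternative via Lemma \ref{index p}) or that the configuration $q = z$, $r = p^2$, $z \ge p^3$ simply cannot occur. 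Controlling this equality case of Theorem \ref{first} when $|G':Z(G)| = p^2$ is the main obstacle; everything preceding it is the bookkeeping above.
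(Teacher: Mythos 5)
Your bookkeeping down to the case $r = |G':Z(G)| = p^2$ with $|Z(G)| > p^2$ is correct (the cases $r \ge |Z(G)|$, $r \ge p^3$, $r = 1$, and $r = p$ are all disposed of legitimately), and the further reduction to $|G:G'| = |Z(G)|$, hence $Z_2 \le G'$, is sound. But the proof genuinely stops there: your final paragraph is a plan, not an argument. Nothing in it establishes that a suitable element $a$ exists, nor that the configuration $|G:G'| = |Z(G)|$, $|G':Z(G)| = p^2$ is impossible, and you say as much yourself. Since this is the only case carrying content beyond results already quoted, the gap is fatal as written.

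The missing idea is not a delicate analysis of the pairing $G'/Z(G) \to Z(G)$ in the equality case, but a different choice of element made at the outset. The paper takes $a \in G \setminus Z_2 G'$, which is nonempty by Lemma \ref{Z_2G'}, and factors $|G:Z(G)| = |G:D(a)|\cdot|D(a):C(a)|\cdot|C(a):Z(G)| = |G:D(a)|\cdot|Z(G)|\cdot|C(a):Z(G)|$ using Lemma \ref{cents}. Since $a \notin Z_2$ and $a \notin Z(G)$, both outer factors are at least $p$. If their product is at least $p^3$, the second alternative holds at once. Otherwise both equal $p$, so $C(a) = \langle a \rangle Z(G)$; then $C(a)/Z(G)$ has order $p$ and is generated by $aZ(G)$ with $a \notin G'$, which immediately gives $C(a) \cap G' = Z(G)$ --- exactly the condition you were struggling to manufacture --- and Lemmas \ref{D-quo} and \ref{index p} yield the first alternative. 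In other words, the condition $C(a) \cap G' = Z(G)$ is obtained by forcing $C(a)$ to be small with $a$ outside $G'$, rather than by proving injectivity of the commutator map on $G'/Z(G)$ for an element chosen only up to $|G:D(a)| = p$. This single choice of $a$ handles all values of $|G':Z(G)|$ uniformly, so your preliminary case split on $r$, while correct, is unnecessary.
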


\begin{proof}
We may assume $Z(G) < G'$.  Let $a \in G \setminus Z_2 G'$.  Thus, $|G:D(a)| \ge p$ and $|C(a):Z(G)| \ge p$, and so, $|G:D(a)||C(a):Z(G)| \ge p^2$.  By Lemma \ref{cents}, we know that $|D(a):C(a)| = |Z(G)|$.  If $|G:D(a)||C(a):Z(G)| \ge p^3$, then $|G:Z(G)| \ge p^3 |Z(G)|$.  If $|G:Z(G)| \le p^6$, then this implies that $|Z(G)| \le p^3$, and so, $|Z(G)|^2 \le |G:Z(G)|$. Thus, we may assume that $|G:D(a)||C(a):Z(G)| = p^2$.  This implies that $|G:D(a)| = |C(a):Z(G)| = p$, so $C(a) = \langle a \rangle Z(G)$.  Since $a \not\in G'$, we have $C(a) \cap G' = Z(G)$.  By Lemma \ref{D-quo}, $D(a)/Z(G)$ is abelian.  Applying Lemma \ref{index p}, we obtain the conclusion that $|Z(G)|^2 \le |G:Z(G)|$.
\end{proof}

Let $(G, Z(G))$ be a Camina pair.  We define the set ${\mathcal C} (G)$ by $\{ x \in G \mid C (x) \cap G' > Z (G) \}$.  (We normally omit the $G$, and just write ${\mathcal C}$.)  If $G' = Z(G)$, then ${\mathcal C}$ is empty.  On the other hand, it is not difficult to see that $G'$ and $C (G')$ are contained in ${\mathcal C}$, so ${\mathcal C}$ is nonempty if $Z(G) < G'$.  This next lemma gives a second characterization of ${\mathcal C}$.

\begin{lemma}\label{scriptC}
Let $(G,Z (G))$ be a Camina pair with $Z (G) < G'$.  Then ${\mathcal C} = \cup_{a \in G' \setminus Z (G)} C (a)$.
\end{lemma}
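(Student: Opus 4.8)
The plan is to prove the set equality by double inclusion, with the entire argument resting on two elementary observations. First, commuting is symmetric: $x \in C(a)$ if and only if $a \in C(x)$, since both say $[x,a]=1$. Second, for every $x \in G$ we have $Z(G) \le C(x) \cap G'$, because $Z(G) \le G'$ by the Camina condition and $Z(G)$ is central, so the strict inequality $C(x) \cap G' > Z(G)$ defining membership in ${\mathcal C}$ is equivalent to the existence of an element of $C(x) \cap G'$ lying outside $Z(G)$.

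For the inclusion $\cup_{a \in G' \setminus Z(G)} C(a) \subseteq {\mathcal C}$, I would start with $x \in C(a)$ for some $a \in G' \setminus Z(G)$. By symmetry $a \in C(x)$, and since $a \in G'$ we get $a \in C(x) \cap G'$. As $a \notin Z(G)$, this element witnesses $C(x) \cap G' > Z(G)$, so $x \in {\mathcal C}$.

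For the reverse inclusion ${\mathcal C} \subseteq \cup_{a \in G' \setminus Z(G)} C(a)$, I would take $x \in {\mathcal C}$, so that $C(x) \cap G' > Z(G)$. Using the second observation above, I may choose an element $a \in (C(x) \cap G') \setminus Z(G)$. Then $a \in G' \setminus Z(G)$, and since $a \in C(x)$ symmetry gives $x \in C(a)$, placing $x$ in the union on the right.

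This argument is essentially a formal unwinding of the definition of ${\mathcal C}$, and I do not expect any genuine obstacle. The only point needing a little care is the bookkeeping around the second observation: one must verify that $Z(G) \le C(x) \cap G'$ in order to translate the strict containment $C(x) \cap G' > Z(G)$ into the existence of a commuting commutator element outside the center, and conversely. Both directions of that translation are immediate from $Z(G) \le G'$ and the centrality of $Z(G)$.
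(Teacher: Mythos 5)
Your proof is correct and is essentially identical to the paper's: both directions are handled by the same double-inclusion argument, using the symmetry of commuting to pass between $x \in C(a)$ and $a \in C(x) \cap G'$, and choosing a witness $b \in (C(x)\cap G')\setminus Z(G)$ for the forward inclusion. The extra bookkeeping you note about $Z(G) \le C(x)\cap G'$ is implicit in the paper but does no harm to spell out.
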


\begin{proof}
Suppose $x \in {\mathcal C}$.  Then $C (x) \cap G' > Z (G)$.  Hence, there exists $b \in (C (x) \cap G') \setminus Z (G)$.  Observe that $x \in C (b) \subseteq \cup_{a \in G' \setminus Z (G)} C (a)$.  Hence, ${\mathcal C} \subseteq \cup_{a \in G' \setminus Z (G)} C (a)$.

On the other hand, if $x \in \cup_{a \in G' \setminus Z (G)} C (a)$, then $x \in C (b)$ for some $b \in G' \setminus Z (G)$.  It follows that $b \in (C(x) \cap G') \setminus Z (G)$.  This implies that $Z (G) < C (x) \cap G'$, and so, $x \in {\mathcal C}$.
\end{proof}


We now use the structure of $\mathcal C$.

\begin{lemma}\label{inter}
Let $(G,Z (G))$ be a Camina pair with $Z (G) < G'$.  Let $n$, $m$, and $l$ be positive integers so that $|G:Z(G)| = p^n$, $|Z(G)| = p^{m}$, and $|G':Z(G)| = p^l$.  Suppose $k$ is in integer so that $p^k \le |G:H|$ for any subgroup $H$ of $G$ all of whose elements lie in ${\mathcal C}$.  If ${\mathcal C} < G$, then $k \le 2l$ and $k \le 2(n-m-1)$.
\end{lemma}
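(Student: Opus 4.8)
The plan is to use the defining inequality $p^{k}\le |G:H|$ directly: to prove an upper bound $k\le B$ it suffices to produce a single subgroup $H\le G$ with $H\subseteq {\mathcal C}$ and $|G:H|\le p^{B}$, for then $p^{k}\le |G:H|\le p^{B}$. The basic supply of such subgroups comes from Lemma \ref{scriptC}: for each $a\in G'\setminus Z(G)$ we have $C(a)\subseteq {\mathcal C}$, since $x\in C(a)$ forces $a\in (C(x)\cap G')\setminus Z(G)$. Let $c$ be the nilpotence class (so $c\ge 3$ because $Z(G)<G'$). Then $Z(G)=G_c$ and $G_{c-1}\le G'\cap Z_2$, so any $a\in G_{c-1}\setminus Z(G)$ has $D(a)=G$, and by Lemma \ref{cents} such a $C(a)$ has index exactly $|Z(G)|=p^{m}$ in $G$. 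Hence $k\le m$ unconditionally, and both conclusions reduce to inequalities on $m$.

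For $k\le 2l$ I would show $m\le 2l$ by the Three Subgroups Lemma (already invoked before Lemma \ref{Z_2G'}). Since $G_{c-1}=[G_{c-2},G]$, choose $u\in G_{c-2}$ and $h\in G$ with $a=[u,h]\notin Z(G)$, and fix this $a\in G_{c-1}$. For every $g\in G$, reading the Hall--Witt identity modulo $G_{c+1}=1$ (so all conjugations are trivial on the central values), $[g,a]=[g,[u,h]]$ lies in the subgroup generated by $[u,[h,g]]$ and $[h,[g,u]]$. Now $[h,g]\in G'$ gives $[u,[h,g]]\in [u,G']$, and $[g,u]\in G_{c-1}$ gives $[h,[g,u]]\in [h,G_{c-1}]$; both $[u,G']$ and $[h,G_{c-1}]$ are images of homomorphisms into the central (elementary abelian) group $Z(G)$ that kill $Z(G)$, so each has order at most $|G'/Z(G)|=p^{l}$. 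Thus every $[g,a]$ lies in the subgroup $[u,G']\,[h,G_{c-1}]$ of $Z(G)$, which has order at most $p^{2l}$. Because $a\in Z_2$, the map $g\mapsto [g,a]$ is a homomorphism onto $Z(G)$ by Lemma \ref{cents}, so $Z(G)=[u,G']\,[h,G_{c-1}]$ and therefore $p^{m}\le p^{2l}$, i.e. $m\le 2l$ and $k\le m\le 2l$.

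For $k\le 2(n-m-1)$ I would combine $m\le 2l$ with Theorem \ref{first} (which gives $m\le n-l$) to obtain $3m\le 2n$; the target $3m+2\le 2n$ can therefore fail only in the boundary configurations $3m\in\{2n-1,2n\}$, which force $c=3$, $m\approx 2l$, $n\approx 3l$, and make every map $a\mapsto[\cdot,a]\colon G/G'\to Z(G)$ ($a\in G'\setminus Z(G)$) an isomorphism, with $C(a)=G'={\mathcal C}$. This is exactly where the hypothesis ${\mathcal C}<G$ enters: it supplies an element $b\in G\setminus{\mathcal C}$, i.e. $C(b)\cap G'=Z(G)$, hence an element lying outside $G'$ to which the \emph{full} Camina condition (surjectivity of $y\mapsto[y,b]$ onto $Z(G)$ on $D(b)$, Lemma \ref{cents}) applies. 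A second application of the Three Subgroups Lemma to $[y,b]$, together with the parity constraint that $|G:Z(G)|=p^{n}$ is a square (so $n$ is even), should rule out these boundary configurations and yield $3m+2\le 2n$, equivalently $k\le m\le 2(n-m-1)$. The main obstacle is precisely this last step: the factor $p^{m}$ coming from $C(a)$ is tight here, so one cannot simply exhibit a larger subgroup inside ${\mathcal C}$; instead one must show the boundary case is incompatible with the Camina condition for elements outside $G'$, and bounding the top-degree commutators $[y,b]$ with $y\notin G'$ (and extracting the extra factor of $p^{2}$ from the squareness of $|G:Z(G)|$) is the delicate part of the argument.
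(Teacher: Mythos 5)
Your overall strategy is the right one: to bound $k$ from above you must exhibit a subgroup of $G$ contained in ${\mathcal C}$ whose index is at most the target power of $p$. For the bound $k \le 2l$ your route is genuinely different from the paper's and appears to be correct: taking $a = [u,h] \in G_{c-1}\setminus Z(G)$ with $u \in G_{c-2}$ (such $a$ exists since $G_{c-1}=[G_{c-2},G]\not\le G_c$), all three triple commutators in the Hall--Witt identity lie in $G_c = Z(G)$, so every value $[g,a]$ lies in the subgroup $[u,G']\,[h,G_{c-1}]$ of $Z(G)$, which has order at most $p^{2l}$ because each factor is the image of a homomorphism killing $Z(G)$ defined on a subgroup of $G'$; surjectivity of $g \mapsto [g,a]$ onto $Z(G)$ then gives $m \le 2l$, and since $C(a) \subseteq {\mathcal C}$ has index exactly $p^m$ you get $k \le m \le 2l$. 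This is actually sharper than what the paper's proof yields for this half (the paper bounds $|G:D(x)\cap D(y)|\le p^{2l}$ via conjugacy class sizes in $G/Z(G)$), and it does not even use the hypothesis ${\mathcal C}<G$.

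The second bound is where there is a genuine gap. Having committed to the subgroup $C(a)$, whose index is exactly $p^m$, you are forced to prove $m \le 2(n-m-1)$, i.e.\ $3m+2\le 2n$. That inequality is precisely the conclusion of Corollary \ref{small}, which the paper obtains only under extra hypotheses; establishing it just in the case $n-m=3$ occupies the entire proof of Theorem \ref{fourth}, and the paper's unconditional bound is only $4m+1\le 3n$. Your combination of $m\le 2l$ with $l\le n-m$ yields only $3m\le 2n$, and the plan for excluding the boundary cases $3m\in\{2n-1,2n\}$ is not carried out -- you acknowledge as much. The reduction is also more costly than necessary: the lemma asserts $k\le 2(n-m-1)$ only for the given $k$, which in the application in Theorem \ref{derived bound} is $k=m-l+1$, potentially much smaller than $m$, so there is no need for $m$ itself to satisfy the inequality. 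The paper sidesteps all of this by producing a different subgroup: since ${\mathcal C}<G$ there is $x\notin{\mathcal C}$, so $D(x)/Z(G)$ is abelian by Lemma \ref{D-quo}; choose such an $x$ with $D(x)$ maximal and take $y\notin D(x)$. Every $z\in D(x)\cap D(y)$ then satisfies $D(z)\ge\langle D(x),y\rangle>D(x)$, so by maximality $D(z)/Z(G)$ is nonabelian and $z\in{\mathcal C}$. Since $x\in C(x)\setminus Z(G)$ forces $|G:D(x)|\le p^{n-m-1}$ (and likewise for $y$), the subgroup $D(x)\cap D(y)\subseteq{\mathcal C}$ has index at most $p^{2(n-m-1)}$, giving $k\le 2(n-m-1)$ directly. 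That construction is the missing idea in your argument.
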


\begin{proof}
If $x \in G \setminus {\mathcal C}$, then $C (x) \cap G' = Z(G)$.  By Lemma \ref{D-quo}, $D (x)/Z (G)$ is abelian.  Thus, we may pick $x \in G \setminus Z(G)$ so that $D(x)$ is maximal subject to $D(x)/Z(G)$ is abelian.  Let $y \in G \setminus D(x)$.  For $z \in D(x) \cup D(y)$, we have that $D (x)$ and $y$ are both in $D (z)$, so $D (x) < D (z)$.  The maximality of $D (x)$ implies that $D (z)/Z(G)$ is nonabelian.  By \ref{D-quo}, this implies that $C (z) \cap G' > Z (G)$, and so, $z \in {\mathcal C}$.  In particular, every element in $D(x) \cap D (y)$ lies in ${\mathcal C}$.  By the hypothesis, we have $|G:D (x) \cap D (y)| \ge p^k$.

Notice that $|G:D (x)||C (x):Z (G)| = |G:D (y)||C (y):Z (G)| = p^{n-m}$.  Since $x \in C (x) \setminus Z (G)$ and $y \in C (y) \setminus Z (G)$, we have $|G:D (x)| \le p^{n-m-1}$ and $|G:D (y)| \le p^{n-m-1}$.  This implies that $|G: D (x) \cap D (y)| \le |G:D (x)||G:D (y)| \le p^{2n-2m-2}$.  Since $D (x) \cap D(y) \le C (a)$, we have $|G: C (a)| \le |G:D (x) \cap D (y)|$.  This implies that $p^{k} \le p^{2n-2m-2}$, and hence, $k \le 2(n - m - 1)$.

Also, notice that $|G:D (x)|$ is the size of the conjugacy class of $x Z (G)$ in $G/Z (G)$.  It is not difficult to see that the size of the conjugacy class of $x Z (G)$ in $G/Z (G)$ will be bounded by $|G':Z(G)|$.  Thus, $|G:D (x)| \le p^l$.  It follows that $|G: D (x) \cap D (y)| \le p^{2l}$, and working as in the last paragraph, we obtain $k \le 2l$.
\end{proof}

We get a better bound in a restricted case.

\begin{corollary} \label{small}
Let $(G,Z(G))$ be a Camina pair with $Z (G) < G'$.  Let $n$ and $m$ be positive integers so that $|G:Z (G)| = p^n$ and $|Z (G)| = p^{m}$.  If there exists $a \in (G' \cap Z_2) \setminus Z (G)$ such that $C (b) \le C (a)$ for all $b \in G' \setminus Z (G)$, then $3m + 2 \le 2n$.  In particular, this occurs if either $|C_G (G'): Z(G)| = p^{n-m}$ or $|G':Z(G)| = p$.
\end{corollary}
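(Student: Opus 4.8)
The plan is to derive $3m+2\le 2n$ directly from Lemma \ref{inter}, the key point being that the hypothesis collapses ${\mathcal C}$ to a single centralizer subgroup of index $p^m$. First I would note that $a\in (G'\cap Z_2)\setminus Z(G)\subseteq G'\setminus Z(G)$, so $C(a)$ is one of the sets occurring in the union of Lemma \ref{scriptC}; combined with the hypothesis $C(b)\le C(a)$ for every $b\in G'\setminus Z(G)$, this gives ${\mathcal C}=\cup_{b\in G'\setminus Z(G)}C(b)=C(a)$, so ${\mathcal C}$ is itself a subgroup. Next, since $a\in Z_2$, every $g\in G$ satisfies $[g,a]\in Z(G)$, whence $D(a)=G$; Lemma \ref{cents} then yields $G/C(a)\cong Z(G)$ and so $|G:C(a)|=|Z(G)|=p^m$. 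In particular ${\mathcal C}=C(a)<G$, which is the hypothesis needed to invoke Lemma \ref{inter}.

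With ${\mathcal C}=C(a)$ a subgroup of index $p^m$, any subgroup $H$ all of whose elements lie in ${\mathcal C}$ satisfies $H\le C(a)$, hence $|G:H|\ge p^m$. I would therefore apply Lemma \ref{inter} with $k=m$; its conclusion $k\le 2(n-m-1)$ becomes $m\le 2n-2m-2$, which rearranges to the desired bound $3m+2\le 2n$.

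For the ``in particular'' clause I would check that each stated condition produces such an $a$. Suppose first $|G':Z(G)|=p$. Since $G/Z(G)$ is nilpotent and $G'/Z(G)$ is a nontrivial normal subgroup, $(G'\cap Z_2)/Z(G)=G'/Z(G)\cap Z(G/Z(G))$ is nontrivial, and having order $p$ it equals $G'/Z(G)$, so $G'\le Z_2$. Fix any $a\in G'\setminus Z(G)$. Each $b\in G'\setminus Z(G)$ can be written $b=a^jz$ with $z\in Z(G)$ and $j$ coprime to $p$, and since $a\in Z_2$ the commutator $[x,a]$ is central for every $x$, giving $[x,b]=[x,a]^j$. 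As $Z(G)$ is elementary abelian by Lemma \ref{exponent center} and $\gcd(j,p)=1$, the relation $[x,a]^j=1$ forces $[x,a]=1$; hence $C(b)=C(a)$ for all such $b$ and the hypothesis holds.

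Now suppose $|C_G(G'):Z(G)|=p^{n-m}$, equivalently $|G:C_G(G')|=p^m=|Z(G)|$. For each $b\in G'\setminus Z(G)$ we have $C_G(G')\le C(b)$, so $|G:C(b)|\le p^m$; on the other hand Lemma \ref{cents} gives $|G:C(b)|=|G:D(b)|\,|Z(G)|\ge p^m$. Thus $|G:C(b)|=p^m$, forcing $C(b)=C_G(G')$ and $D(b)=G$, so $b\in Z_2$; in particular $G'\le Z_2$, and any $a\in G'\setminus Z(G)$ lies in $(G'\cap Z_2)\setminus Z(G)$ with $C(b)=C_G(G')=C(a)$ for every $b$, so the hypothesis again holds. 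The one step demanding care, and the crux of the whole argument, is the identification ${\mathcal C}=C(a)$ together with the index computation $|G:C(a)|=p^m$ coming from $a\in Z_2$, since this is precisely what pins down the value $k=m$ to feed into Lemma \ref{inter}; everything else is bookkeeping with the commutator map and elementary index counting.
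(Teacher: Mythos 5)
Your proposal is correct and follows essentially the same route as the paper: identify ${\mathcal C}=C(a)$, use $a\in Z_2$ with Lemma \ref{cents} to get $|G:C(a)|=|Z(G)|=p^m$, feed $k=m$ into Lemma \ref{inter}, and verify the two special hypotheses at the end. The only difference is that you supply slightly more detail than the paper (e.g., the explicit argument that $C(b)=C(a)$ when $|G':Z(G)|=p$ and the deduction $b\in Z_2$ in the centralizer case), all of which is sound.
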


\begin{proof}
Suppose there exists $a \in (G' \cap Z_2) \setminus Z (G)$ such that $C (b) \le C (a)$ for all $b \in G' \setminus Z (G)$.  It follows that ${\mathcal C} = C (a)$.  Since $C (a) < G$, this implies that ${\mathcal C} < G$.
Hence, in the notation of Lemma \ref{inter}, we have $k = m$ since $|G:C (a)| = |Z (G)|$.  By that lemma, we obtain $m \le 2(n - m - 1)$, and so, $3m + 2 \le 2n$.

If $|G':Z(G)| = p$, then $G' = \langle a, Z (G) \rangle$.  Observe that $a \in Z_2$ and if $b \in G' \setminus Z(G)$, then $C (b) = C (a)$.  Suppose now that $|C_G (G'):Z (G)| = p^{n-m}$.  If $b \in G' \setminus Z (G)$, then $C_G (G') \le C (b)$ and $|C (b):Z (G)| \le p^{n-m}$.  It follows that $C_G (G') = C (b)$.  Taking $a \in (G' \cap Z_2) \setminus Z (G)$, we have $C (b) = C_G (G') = C (a)$.
\end{proof}

\section{Bounding $|Z(G)|$ by $|G':Z (G)|$}

In this next theorem, we show that $|Z (G)|$ can be bound by a function in terms of $|G':Z (G)|$.  This yields Theorem \ref{second}.

\begin{theorem}\label{derived bound}
Let $(G,Z (G))$ be a Camina pair.  Suppose $m$ and $l$ are positive integers so that $|Z (G)| = p^m$ and $|G':Z (G)| = p^l$.  Then $m \le 3l - 1$.
\end{theorem}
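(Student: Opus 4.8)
The plan is to feed a well-chosen value of $k$ into Lemma~\ref{inter}, where the whole game is to produce a \emph{uniform} lower bound for the index of every subgroup contained in ${\mathcal C}$. Since $l \ge 1$ forces $Z(G) < G'$, the set ${\mathcal C}$ is nonempty and Lemma~\ref{scriptC} gives ${\mathcal C} = \bigcup_{a \in G'\setminus Z(G)} C(a)$. I may assume $m \ge l$, for otherwise $m \le l-1 \le 3l-1$ and there is nothing to prove. The heart of the matter is the claim that every subgroup $H$ of $G$ with $H \subseteq {\mathcal C}$ satisfies $|G:H| \ge p^{m-l+1}$. Granting this, I finish as follows: if ${\mathcal C} < G$, then $k = m-l+1$ meets the hypothesis of Lemma~\ref{inter}, so that lemma yields $m-l+1 = k \le 2l$, i.e.\ $m \le 3l-1$; and if ${\mathcal C} = G$, then $H = G$ is itself a subgroup inside ${\mathcal C}$, so the claim gives $1 = |G:G| \ge p^{m-l+1}$, forcing $m \le l-1 \le 3l-1$.

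To prove the claim, let $H \le G$ with $H \subseteq {\mathcal C}$. The key observation is that $C(az) = C(a)$ for every $z \in Z(G)$, so $C(a)$ depends only on the coset $aZ(G)$; hence as $a$ ranges over $G'\setminus Z(G)$ there are at most $|G':Z(G)| - 1 = p^l - 1$ distinct centralizers among the $C(a)$. Writing the distinct ones as $C_1, \dots, C_r$ with $r \le p^l-1$, we have $H = \bigcup_{j=1}^r (H \cap C_j)$, and by Lemma~\ref{cents} each $|G:C_j| \ge |D(a_j):C(a_j)| = |Z(G)| = p^m$. The elementary counting inequality $\sum_{j} |H:H\cap C_j|^{-1} \ge 1$ then forces $|H : H\cap C_j| \le r \le p^l - 1$ for some $j$; being a power of $p$, this index is at most $p^{l-1}$. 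For that $j$,
\[
|G:H| \;\ge\; \frac{|G:C_j|}{|H:H\cap C_j|} \;\ge\; \frac{p^m}{p^{l-1}} \;=\; p^{m-l+1},
\]
which is the desired bound.

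The main obstacle is exactly this uniform bound: one must notice that ${\mathcal C}$ is covered by only $p^l - 1$ centralizers (constancy of $C(a)$ on $Z(G)$-cosets) and marry a covering-number estimate with the index bound $|G:C(a)| \ge p^m$ supplied by Lemma~\ref{cents}. This is precisely what converts the qualitative inequality $k \le 2l$ of Lemma~\ref{inter} into the quantitative statement $m \le 3l-1$; the rest is bookkeeping. Finally, $m \le 3l-1$ gives $|Z(G)| = p^m \le p^{3l-1} < (p^l)^3 = |G':Z(G)|^3$, which is Theorem~\ref{second}.
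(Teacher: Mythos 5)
Your proof is correct and follows essentially the same route as the paper: both arguments cover ${\mathcal C}$ by at most $p^l-1$ centralizers of elements of $G'\setminus Z(G)$ (using Lemma \ref{scriptC} and the constancy of $C(a)$ on cosets of $Z(G)$), note that each such centralizer has index at least $p^m$ by Lemma \ref{cents}, deduce that every subgroup contained in ${\mathcal C}$ has index at least $p^{m-l+1}$, and then feed $k=m-l+1$ into Lemma \ref{inter}. The only (harmless) difference is the last counting step --- you pigeonhole inside $H$ to find some $H\cap C_j$ of index at most $p^{l-1}$, while the paper instead bounds the cardinality $|{\mathcal C}|<p^{n+l}$ --- and your uniform claim also absorbs the case ${\mathcal C}=C(a)$ that the paper treats separately.
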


\begin{proof}
Observe that $l \ge 1$.  Thus, if $m \le l$, then $m < 3l - 1$.  Hence, we may assume that $m > l$.  If ${\mathcal C} = C (a)$ for some $a \in G' \setminus Z (G)$, then in the notation of Lemma \ref{inter}, we have $k = m$, and by the conclusion of that lemma, $m \le 2l \le 3l - 1$.  Thus, we may assume that $C (a)$ is properly contained in ${\mathcal C}$ for all $a \in G' \setminus Z (G)$.  This implies that $C (G') < C (a)$ when $a \in (G' \cap Z_2) \setminus Z (G)$.

We write ${\mathcal S} = \{ C (a) \mid a \in G' \setminus Z (G) \}$ for the set of centralizers so that ${\mathcal C} = \cup_{C \in {\mathcal S}} C$.  Notice that if $a, a' \in G' \setminus Z (G)$ with $\langle a, Z(G) \rangle = \langle a', Z(G) \rangle$, then $C (a) = C (a')$.  Hence, $|{\mathcal S}|$ is bounded by the number of cyclic subgroups in $G'/Z (G)$.  Since $|G':Z (G)| = p^l$, the number of cyclic subgroups in $G'/Z (G)$ is $(p^l - 1)/(p-1)$.  In particular, $|{\mathcal S}| < (p^l - 1)$.  Let $n$ be the positive integer so that $|G:Z (G)| = p^n$.  It follows that $|G| = |G:Z (G)||Z (G)| = p^n p^m = p^{n+m}$.  If $a \in G \setminus Z(G)$, then $|G:C (a)| \ge p^m$, so $|C (a)| \le p^n$.  In particular, if $C \in {\mathcal S}$, then $|C| \le p^n$.  If we let $|C (G')| = p^h$, then $h < n$.

We now have $|{\mathcal C}| = |\cup_{C \in {\mathcal S}} C| = |\cup_{C \in {\mathcal S}} C \setminus C (G')| + |C (G')|$.  This implies that
$$
|{\mathcal C}| \le \sum_{C \in {\mathcal S}} \left(|C| - |C (G')|\right) + |C (G')| \le |{\mathcal S}|(p^n - p^h) + p^h.
$$
We then obtain $|{\mathcal C}| < (p^l - 1)(p^n - p^h) + p^h.$

Multiplying out the expression in the right-hand side of the inequality, we obtain $(p^l - 1)(p^n - p^h) + p^h = p^l p^n - p^n - p^l p^h + p^h + p^h < p^{l+n}$.  This shows that $|{\mathcal C}| < p^{l+n}$.  Recall that $l < m$, so $|{\mathcal C}| < p^{m+n} = |G|$.  If $H$ is a subgroup of $G$ so that every element of $H$ lies in ${\mathcal C}$, then $|H| \le |{\mathcal C}| < p^{n+l}$.  It follows that $|H| \le p^{n+l-1}$.  This implies that $|G:H| \ge p^{n+m}/p^{n+l-1} = p^{m-l+1}$.  In the context of Lemma \ref{inter}, we may take $k = m-l+1$.
Applying that lemma, we obtain $m - l + 1 \le 2l$.  We conclude that $m \le 3l - 1$.
\end{proof}

We now use the bound of the previous theorem to obtain a bound on $|Z (G)|$ in terms of $|G:Z (G)|$.  In particular, this is Theorem \ref{third}.

\begin{corollary}
Let $(G, Z(G))$ be a Camina pair.  If $|G:Z (G)| = p^n$ and $|Z (G)| = p^m$, then $4m + 1 \le 3n$.
\end{corollary}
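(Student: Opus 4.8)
The plan is to combine, at the level of exponents, the two bounds already established: Theorem~\ref{first}, which controls $|Z(G)|$ by $|G:G'|$, and Theorem~\ref{derived bound}, which controls $|Z(G)|$ by $|G':Z(G)|$. Write $|G:Z(G)| = p^n$, $|Z(G)| = p^m$, and (when $Z(G) < G'$) $|G':Z(G)| = p^l$, where $n, m \ge 1$ since $1 < Z(G) < G$. The idea is that these give two linear inequalities in $m$, $l$, $n$, and a suitable positive combination eliminates $l$ and produces exactly $4m + 1 \le 3n$.

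First I would dispose of the degenerate case $Z(G) = G'$. Here $G' = Z(G) = G_c$ by the lemma identifying $Z(G)$ with the bottom term of the lower central series, and since $G' = G_2$ this forces the nilpotence class $c$ to equal $2$; thus $G$ is a Camina group of class $2$. Lemma~\ref{Camina gp} then gives $|Z(G)|^2 \le |G:Z(G)|$, that is $2m \le n$. As $m \ge 1$ we get $n \ge 2$, so $3n \ge 6m > 4m + 1$, which is more than enough.

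For the main case $Z(G) < G'$, so $l \ge 1$, I would extract the two inequalities. Theorem~\ref{derived bound} gives $m \le 3l - 1$ directly. For the second, note that $|G:G'| = |G:Z(G)|/|G':Z(G)| = p^{\,n-l}$, so Theorem~\ref{first} (namely $|Z(G)| \le |G:G'|$) reads $m \le n - l$, hence $3m \le 3n - 3l$. Adding the first inequality to this gives
$$
4m = m + 3m \le (3l - 1) + (3n - 3l) = 3n - 1,
$$
and therefore $4m + 1 \le 3n$, as desired.

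I do not expect any genuine obstacle here: the substantive work has already been carried out in Theorems~\ref{first} and~\ref{derived bound}, and the corollary is essentially a two-line bookkeeping step once everything is phrased in exponents. The only points requiring care are the identity $|G:G'| = p^{\,n-l}$ (which uses $Z(G) \le G' \le G$) and the separate treatment of the equality case $Z(G) = G'$, where Theorem~\ref{derived bound} does not apply because there $l = 0$.
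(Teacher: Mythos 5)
Your proof is correct and takes essentially the same route as the paper: combine $m \le 3l - 1$ from Theorem \ref{derived bound} with $m \le n - l$ from Theorem \ref{first} (which the paper invokes via Lemma \ref{Z_2G'}) and eliminate $l$. Your separate treatment of the degenerate case $Z(G) = G'$, where $l = 0$ and Theorem \ref{derived bound} does not formally apply, is a detail the paper's proof glosses over, and disposing of it via Lemma \ref{Camina gp} is appropriate.
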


\begin{proof}
Let $l$ be an integer so that $|G':Z (G)| = p^l$.  By Theorem \ref{derived bound}, we know that $m \le 3l - 1$.  By Lemma \ref{Z_2G'}, we have $|G:G'| \le p^m$.  It follows that $|G':Z (G)| \le p^{n-m}$.  It follows that $(m + 1)/3 \le l$ and $l \le n - m$.  Combining these, $(m+1)/3 \le n - m$, and so, $m + 1 \le 3n - 3m$.  Finally, we deduce that $4m + 1 \le 3n$.
\end{proof}

When $|G:Z (G)|$ is small, we can bound $|Z (G)|$ by $|G:Z (G)|^{1/2}$.  This is Theorem \ref{fourth}.

\begin{theorem}
Let $(G,Z(G))$ be a Camina pair with $Z (G) < G'$.  Then either $|Z (G)|^2 \le |G:Z(G)|$ or $|Z (G)|p^4 \le |G:Z(G)|$.  In particular, if $|G:Z(G)| \le p^8$, then $|Z (G)|^2 \le |G:Z (G)|$.
\end{theorem}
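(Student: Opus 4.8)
The plan is to assume $|Z(G)|^2 > |G:Z(G)|$ and deduce $|Z(G)|p^4 \le |G:Z(G)|$, the two reverse inequalities being exactly the alternatives in the statement. Writing $|G:Z(G)| = p^n$, $|Z(G)| = p^m$, and $|G':Z(G)| = p^l$, the goal becomes: assuming $2m > n$, show $m + 4 \le n$. Two facts drive the argument. First, $|G:Z(G)|$ is a square, so $n$ is even. Second, Corollary \ref{m=2} already gives $2m \le n$ or $m + 3 \le n$; since we are assuming the first fails, we have $m + 3 \le n$, so it suffices to rule out the single boundary value $n = m + 3$. Parity then forces $m$ odd (since $m+3 = n$ is even), and $2m > n = m + 3$ forces $m \ge 4$, hence $m \ge 5$.

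Next I would pin down the finitely many surviving pairs $(m,n)$. Theorem \ref{derived bound} gives $m \le 3l - 1$ and Lemma \ref{Z_2G'} gives $l \le n - m = 3$; substituting the first into the second yields $m \le 3(n-m) - 1$, i.e. $4m + 1 \le 3n = 3m + 9$, so $m \le 8$. With $m$ odd and $m \ge 5$ this leaves $(m,n) \in \{(5,8),(7,10)\}$. To eliminate $(7,10)$ I would use the sharper half of Lemma \ref{inter}: the proof of Theorem \ref{derived bound} shows $k = m - l + 1$ is admissible (here $l < m$ forces $\mathcal{C} < G$), so Lemma \ref{inter} yields $m - l + 1 \le 2(n - m - 1)$. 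For $(7,10)$ one has $l = 3$ forced, and then $m - l + 1 = 5 > 4 = 2(n-m-1)$, a contradiction. This same inequality is only an \emph{equality} in the case $(5,8)$, so it does not by itself dispose of that pair.

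The crux, and the step I expect to be the main obstacle, is excluding $(m,n) = (5,8)$ (with $l \in \{2,3\}$, and $G/Z(G)$ of exponent $p$, $p$ odd). Here Corollary \ref{small} cannot apply, since its conclusion $3m + 2 \le 2n$ would read $17 \le 16$; thus no single centralizer dominates, and $\mathcal{C}$ is a genuine union of the $(p^l-1)/(p-1)$ distinct subgroups $C(a)$ with $a \in G' \setminus Z(G)$ (note $G'/Z(G)$ is elementary abelian). On the other hand, the construction inside Lemma \ref{inter} produces a subgroup $H = D(x) \cap D(y)$ with every element in $\mathcal{C}$ and $|G:H| \le p^{2(n-m-1)} = p^4$, whence $|H| \ge p^9$. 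Since Lemma \ref{cents} forces $|C(a)| \le |G|/|Z(G)| = p^n = p^8 < |H|$ for every $a$, no single $C(a)$ contains $H$, so $H$ is covered by the proper subgroups $H \cap C(a)$. I would then run a covering argument: a noncyclic $p$-group needs at least $p+1$ proper subgroups to cover it, and matching this against the count $(p^l-1)/(p-1)$ forces the covering subgroups to be maximal in $H$ with a controlled common intersection, which pins down $|C_G(G')|$; combining this with $Z_2 \le C_G(G')$ (coming from $[Z_2, G'] = 1$) and Lemma \ref{Z_2G'} should yield a contradiction. Making this covering analysis watertight, particularly for $l = 3$ where the count $p^2 + p + 1$ leaves more room, is the hard part of the whole theorem.

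Finally, the ``in particular'' clause is immediate from the dichotomy: if $n \le 8$ and $2m > n$, then $m + 4 \le n \le 8$ gives $m \le 4$, while $2m > n \ge m + 4$ gives $m \ge 5$, a contradiction; hence $2m \le n$, that is, $|Z(G)|^2 \le |G:Z(G)|$.
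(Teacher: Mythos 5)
Your reduction is correct as far as it goes, and it is an efficient repackaging of results already proved in the paper: using Corollary \ref{m=2}, the parity of $n$, Theorem \ref{derived bound} (giving $m \le 3l-1$), Lemma \ref{Z_2G'} (giving $l \le n-m$), and the admissible value $k = m-l+1$ in Lemma \ref{inter}, you correctly pin the only possible violations down to $(m,n) \in \{(5,8),(7,10)\}$ and correctly eliminate $(7,10)$. The problem is that the exclusion of $(m,n)=(5,8)$ --- which you explicitly leave as a sketch --- is not a loose end but the entire substance of the theorem; everything before it is a consequence of the earlier sections. The paper devotes essentially its whole proof of this statement to exactly that situation: it shows that $n-m=3$ forces $3m+2 \le 2n$ (hence $m \le 4$), and doing so requires a case division on $|G':Z(G)| \in \{p,p^2,p^3\}$, repeated appeals to Corollary \ref{small} and Lemma \ref{index p}, and, in the residual case $|G':Z(G)| = p^2$ with $G' = Z_2 = C_G(G')$, a delicate argument constructing a bijection between the order-$p$ subgroups of $G'/Z(G)$ and the subgroups $\langle c, L\rangle$ with $L = D(b_1)\cap D(b_2)$.

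Your proposed covering argument does not obviously close the gap. Running it for $l=2$: $H = D(x)\cap D(y)$ has $|H| \ge p^{9}$ and is covered by at most $p+1$ proper subgroups $H\cap C(a)$, so the standard covering bound forces each $H\cap C(a)$ to be maximal in $H$; this gives $|C(a)| = p^{8} = p^{n}$, hence $D(a)=G$ and $a \in Z_2$ for every $a \in G'\setminus Z(G)$, from which one computes $G' = Z_2 = C_G(G')$ and $|C_G(G'):Z(G)| = p^{2} \ne p^{n-m}$ --- so Corollary \ref{small} is unavailable and you have landed precisely in the paper's hardest subcase rather than at a contradiction. For $l=3$ the count $p^2+p+1$ gives even less traction, as you concede. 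So the step you flag as ``the hard part'' is a genuine gap, and it is the one piece of the proof the theorem cannot do without.
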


\begin{proof}
Let $n$ and $m$ be positive integers so that $|G:Z(G)| = p^n$ and $|Z(G)| = p^{m}$.  We need to show that either $n - m \ge 4$ or $n \le n/2$.  We assume $n - m \le 3$.  We know by Lemma \ref{m=2} that the result holds when $n - m = 2$.  Thus, we may assume $n - m = 3$.  If the inequality $3m + 2 \le 2n$ holds, then $n + 2 \le 3n - 3m = 3(n - m)$ holds by adding $n - 3m$ to both sides.  Since $n - m = 3$, we have $n \le 7$.  On the other hand, we know $|G:Z (G)|$ has to be a square, so $n$ is even.  Hence, $n \le 6$, and either $n = 6$ or $n = 4$.  If $n = 6$, then $m = 3 = n/2$, and if $n = 4$, then $m  = 1 < n/2$, and so the result holds.  Thus, it suffices to prove that $3m + 2 \le 2n$ holds.

By Lemma \ref{Z_2G'}, $|G':Z(G)| \le p^{n-m} = p^3$.  If $|G':Z(G)| = p$, then we are done by applying Corollary \ref{small}.  Thus, we may assume that $|G':Z (G)| > p$.  Notice that $Z_2 \le C_G (G')$.  If $|Z_2:Z (G)| = p^3$, then we may again Corollary \ref{small} to obtain the conclusion.  Thus, we may assume that $|Z_2:Z (G)| < p^3$.

We now suppose that $|G':Z (G)| = p^3$.  By the previous paragraph, we have $G' \cap Z_2 < G'$.  If $|G':G' \cap Z_2| = p$, then $G' \cap Z_2$ is central in $G'$ and $G'/G' \cap C_2$ is cyclic.  It follows that $G'$ is abelian, and so, $G' \le C_G (G')$.  We deduce that $|C_G (G'):G'| = p^3$, and we may apply Corollary \ref{small} once again to obtain the conclusion.  Thus, we must have $|G':G' \cap Z_2| = p^2$, and so, $|G' \cap Z_2: Z (G)| = p$.  This implies that $G' \cap Z_2 = \langle a, Z(G) \rangle$.  If $x \in (Z_2 \cap G') \setminus Z(G)$, then $C (x) = C(a)$.  Recall that $G' \le C (a)$, and since $|G':Z (G)| = |C (a): Z(G)| = p^3$, we conclude that $C (a) = G'$.  If $b \in G' \setminus Z_2$, then $|G:D (b)| \ge p$, so $|C (b):Z (G)| \le p^2$.  Since $a, b \in C (b)$, we conclude that $C (b) = \langle a, b, Z (G) \rangle \le G'$.  This implies that the hypotheses of Corollary \ref{small} are met, and so the conclusion holds.

For the rest of this proof, we have $|G':Z (G)| = p^2$.  If $G' \cap Z_2 < G'$, then $|G' \cap Z_2: Z(G)| = p$.  In particular, $Z_2 \cap G' = \langle a, Z(G) \rangle$.  If $x \in (Z_2 \cap G') \setminus Z(G)$, then $C (x) = C(a)$.  Let $b \in G' \setminus Z_2$.  Then $|G:D (b)| \ge p$, so $|C (b): Z (G)| \le p^2$.  Since $a, b \in C (b)$, we have $C (b) = \langle a, b, Z(G) \rangle \le G' \le C (a)$.  We now apply Corollary \ref{small}, and we are done.

We have $G' \cap Z_2 = G'$, and since $|Z_2:Z (G)| < p^3$, we have $Z_2 = G'$.  Hence, $G' \le C_G (G')$.  If $G' < C_G (G')$, then we may appeal to Corollary \ref{small} to finish, so we may assume $G' = C_G (G')$.  We can find $a_1, a_2$ so that $G' = \langle a_1, a_2, Z(G) \rangle$.  Notice that $G' = C_G (G') = C_G (a_1) \cap C_G (a_2)$.

Consider $a \in G' \setminus Z (G)$.  We know that $G = D (a)$, so $|C (a): Z (G)| = p^3$.  Since $G' \le C (a)$, we have $|C (a):G'| = p$.  It follows that $C (a) = \langle b, G' \rangle$ for some $b \in G \setminus Z_2$.  Since $b$ is not in $Z_2$, it follows that $|G:D (b)| \ge p$, and so, $|C (b): Z(G)| \le p^2$.  Now, $a, b \in C (b)$, and so, $C (b) = \langle a, b, Z (G) \rangle$.  This implies that $|C (b):Z (G)| = p^2$, and so, $|G:D (b)| = p$.  By Lemma \ref{index p}, we know that $D (b)/Z (G)$ is not abelian.

Now, pick $b_i$, for $i = 1, 2$ so that $C (a_i) = \langle a_i, b_i, Z (G) \rangle$.  By Lemma \ref{cents}, we know that $D (b_i)' \le C (b_i)$.  Let $L = D (b_1) \cap D (b_2)$, and observe that $L' \le D (b_1)' \cap D (b_2)' \le C (b_1) \cap C (b_2)$.  Notice that $C (b_i) \le C (a_i)$, so $C (b_1) \cap C (b_2) \le C (a_1) \cap C (a_2) = G'$.  Hence, $C (b_1) \cap C (b_2) = C (b_1) \cap C (b_2) \cap G' = Z (G)$.  It follows that $L/Z (G)$ is abelian.  Notice that if $D (b_1) = D (b_2)$, then $D (b_1) = L$ which is a contradiction, since we have that $D (b_1)/Z (G)$ is not abelian.  Hence, we have that $D (b_1) \ne D (b_2)$.  Since $|G:D (b_1)| = |G:D (b_2)| = p$, it follows that $|G:L| = p^2$.

If $b_1$ is not in $L$, then $b_1$ is not in $D (b_2)$.  This implies that $b_2$ is not in $D (b_2)$, and so, $b_2$ not in $L$.  Notice that $D (b_1) = \langle b_1, L \rangle$ and $D (b_2) = \langle b_2, L \rangle$.  It follows that $G = \langle b_1, b_2, L \rangle$.  Now, if $x \in L$, then $L, b_1, b_2 \in D (x)$, and thus, $D (x) = G$.  This implies that $x \in Z_2$, and so, $L \le Z_2$.  We have $p^2 = |G:L| \ge |G:Z_2| \ge p^{m}$.  We obtain $m \le 2$, and since $n - m = 3$. This implies $n \le 5$.  We have seen that $n$ is even and at least $4$.  We conclude that $n = 4$ and $m = 2$, and the result holds.  Thus, we may assume that $b_1 \in L$ and this implies that $b_1 \in D (b_2)$, and so, $b_2 \in D (b_1)$.  We conclude that $b_2 \in L$.  Notice that this implies that $C (a_1), C (a_2) \le L$.

Let $a \in G' \setminus Z (G)$ and consider $b \in C_G (a)$.  Suppose $a \in \langle a_1, Z (G) \rangle \cup \langle a_2, Z (G) \rangle$.  By the previous paragraph, we know that $b \in L$.  We now suppose that $a \in G' \setminus (\langle a_1, Z (G) \rangle \cup \langle a_2, Z (G) \rangle)$.  It follows that $G' = \langle a, a_1, Z (G) \rangle = \langle a, a_2, Z(G) \rangle$.  Applying the argument in the previous paragraph, we see that $b \in D (b_1)$ and $b \in D (b_2)$.  It follows that $b \in D (b_1) \cap D (b_2) = L$.  In particular, we have $L \le D (b)$.  If $b \not\in G' = Z_2$, then we have shown that $|G:D (b)| = |D (b):L| = p$.  Notice that the previous paragraph implies that $D(b) \cap D(b_1) = D(b) \cap D(b_2) = L$.

We now show that there is a bijection between ${\mathcal Z} = \{ \langle a, Z (G) \rangle \mid a \in G' \setminus Z (G) \}$ and ${\mathcal L} = \{ \langle c, L \rangle \mid c \in G \setminus L \}$ defined by $\langle a, Z(G) \rangle \mapsto M_a$ where $M_a = \langle c, L \rangle = D (b)$ with $C (a) = \langle b, G' \rangle$.

We first show that this map is well-defined.  Observe that if $\langle a, Z(G) \rangle = \langle a', Z(G) \rangle$, then $C (a') = C (a)$.  Next, observe that if $C (a) = \langle b, G' \rangle = \langle b', G' \rangle$, then $b' = b y$ where $y \in G' = Z_2$.  This implies that $D (b') = D (by) = D (b)$.  This shows that the map is well-defined.

We next show that the map is one-to-one.  Suppose that $\langle a, Z (G) \rangle \ne \langle a', Z(G) \rangle$.  Then $G' = \langle a, a', Z(G) \rangle$, and we can repeat the previous paragraph with $a$ and $a'$ in place of $a_1$ and $a_2$.  From that paragraph, we know that $M_a \cap M_{a'} = L$, so $M_a \ne M_{a'}$.  This shows that the map is one-to-one.  Notice that $|G':Z(G)| = |G:L| = p^2$.  Hence, $|{\mathcal Z}| = |{\mathcal L}| = (p^2 - 1)/(p - 1) = p+1$ (the number of subgroups of order $p$ in an elementary abelian group of order $p^2$).  Thus, the map is a bijection.

Suppose $c \in L$.  Since $L/Z(G)$, we know that $L \le D (c)$. If $c \not\in G' = Z_2$, then we know that $D (c) < G$.  If $L < D (c)$, then $D (c) = M_a$ for some $a \in G' \setminus Z (G)$.  By Lemma \ref{index p}, we know that $C (c) \cap G' > Z (G)$.  Hence, there exists $a' \in (C (c) \cap G') \setminus Z (G)$, and so, $c \in C (a')$.  It follows that $M_{a'} = D (c) = M_a$.  By the bijection, this implies that $\langle a, Z(G) \rangle = \langle a', Z(G) \rangle$, and so, $C (a) = C (a')$. In particular, $c \in C (a)$.

Observe that $|L:Z_2| = p^{n-4}$.  If $n = 4$, then the result holds.  Thus, we may assume that $n > 4$ and so $Z_2 < L$.  Thus, there exists an element $g \in G$ with $D (g) \cap L < L$.  If $g \in L$, then $L \le D (g)$ since $L/Z(G)$ is abelian.  Thus, $g$ is not in $L$.  We have $\langle g, L \rangle \in {\mathcal L}$.  Thus, $\langle g, L \rangle = M_a$ for some $a \in G' \setminus Z (G)$.  If $c \in D (g) \cap L$, then $M_a = \langle g, L \rangle \le D (c)$.  By the previous paragraph, this implies that either $c \in Z_2$ or $c \in C (a)$.  Since $Z_2 \le C (a)$, it follows that $D (g) \cap L \le C (a)$.  We know that $|G: D(g) \cap L| \le |G:D (g)||G:L| \le p^2 p^2 = p^4$.  We now have $p^{m} = |G:C (a)| \le |G:D (g) \cap L| = p^4$ and $m \le 4$.  It follows that $n = m + 3 \le 7$.  Since $n$ is even, $n = 6$ and thus, $m = 3$, and we have the desired result.
\end{proof}

\section{Examples?}

The following are some properties of a possible family of groups which if the family exists would be a counterexample to the conjecture.  We remind the reader that we do not have any examples of Camina pairs $(G,Z(G))$ where $|Z(G)|^2 > |G:Z(G)|$.  What follows is an outline of a possible family which if it is exists would have this property.  In talking with James Wilson, we believe it is possible to prove that these groups exist, but we have not included the proof here.

Take $p$ to be an odd prime and $k$ to be an (odd) integer.  Then $G$ will be a class 3 group of order $p^{5k+1}$.  Both $G/Z(G)$ and $Z(G)$ have exponent $p$.  (I would not be surprised if $G$ has exponent $p$, but that does not seem to be required.)  The center of $G$, $Z(G)$ has order $p^{2k}$.  Thus, $Z (G)$ is an elementary abelian $p$-group of order $p^{2k}$.  We have $Z(G) = [G',G]$.  Also, $Z_2 (G)$ is an abelian group of order $p^{3k+1}$.  In other words, $Z_2 (G)/ Z (G)$ is an elementary abelian group of order $p^{k+1}$.  If $g \in Z_2 (G) \setminus Z(G)$, then $C_G (g) = Z_2 (G)$.  We require $G/Z_2 (G)$ to be abelian.  Notice that $|G:Z_2 (G)| = p^{2k}$.  I can show that $|Z_2 (G):G'| \le p$.  It seems that either $Z_2 (G) = G'$ or $|Z_2 (G):G'| = p$ can occur.  At this time, I do not see any reason why one of these possibilities must occur or not occur. If $g \in G \setminus Z_2 (G)$, then $C_G (g) = \langle g, Z(G) \rangle$.  In addition, $|G:D(g)| = p^k$, so $|D(g):Z_2 (G)| = p^k$.  We need $D (g)/ Z(G)$ is abelian, and in fact, $D (g)' = Z(G)$.  Also, if $h \in D (g) \setminus Z_2 (G)$, then $D (g) = D (h)$.  In particular, $G \setminus Z_2 (G)$ is partitioned by the sets $D (g) \setminus Z_2 (G)$ as $g$ runs over the elements in $G \setminus Z_2 (G)$.

Obviously, $G/ Z(G)$ is a class $2$ group with exponent $p$ whose center has order $p^{k+1}$ and index $p^{2k}$.  The centralizer of every noncentral element is abelian and has order $p^{2k+1}$.  Such groups definitely occur.  Let $S$ be a Sylow $p$-subgroup of ${\rm SL}_3 (p^k)$.  Then $T = S \times Z_p$ has exactly these properties.  Observe that $Z(T) = T' \times Z_p$ where $T' = S'$, so $|Z(T):T'| = p$.  Do there exist examples of such groups with equality?

Notice that if $g \in G \setminus Z_2 (G)$, then $D (g)$ has class $2$.  In fact, we claim that $D (g)$ is special.  I.e., we claim that $Z (D(g)) = Z (G)$.  Notice that $Z_2 (G)$ is an abelian, characteristic subgroup of $D (g)$ with index $p^k$ and order $p^{3k + 1}$.  If $h \in D(g) \setminus Z_2 (G)$, then $C_{D (g)} (h) = \langle h, Z (G) \rangle$, and so $|D (g): C_{D (g)} (h)| = p^{2k} = |D (g)'|$.  This implies that ${\rm cl}_{D(g)} (h) = h Z (G)$.

Notice that if these groups exist, then we have $|Z (G)| = p^{2k}$ and $|G:Z(G)| = p^{3k+1}$.  In particular, these groups have the property that $|Z (G)| < |G:Z (G)|^{2/3}$, and so, even if these groups exist, there would still be a gap between the bound we can prove and the bound that can be obtained (at least asymptotically).

\bigskip

\end{document}